\newtheorem{theorem}{Theorem}[section]
\newtheorem{lemma}[theorem]{Lemma}
\newtheorem{corollary}[theorem]{Corollary}
\theoremstyle{definition}
\newtheorem{example}[theorem]{Example}
\theoremstyle{remark}
\numberwithin{equation}{section}
\begin{document}

\title[Pedersen--Takesaki operator equation]{Pedersen--Takesaki operator equation in Hilbert $C^*$-modules}

\author[R. Eskandari, X. Fang, M.S. Moslehian, Q. Xu]{Rasoul Eskandari$^{1}$, Xiaochun Fang$^2$, M. S. Moslehian$^3$, Qingxiang Xu$^4$}
\address{$^1$Department of Mathematics, Faculty of Science, Farhangian University, Tehran, Iran.}
\email{eskandarirasoul@yahoo.com}

\address{$^2$Department of Mathematics, Tongji University, Shanghai 200092, PR China.}
\email{xfang@tongji.edu.cn}

\address{$^3$Department of Pure Mathematics, Center of Excellence in
Analysis on Algebraic Structures (CEAAS), Ferdowsi University of Mashhad, P. O. Box 1159, Mashhad 91775, Iran.}
\email{moslehian@um.ac.ir}

\address{$^4$Department of Mathematics, Shanghai Normal University, Shanghai 200234, PR China}
\email{qingxiang\_xu@126.com}

\renewcommand{\subjclassname}{\textup{2020} Mathematics Subject Classification} \subjclass[]{46L08, 46L05, 47A62.}

\keywords{Hilbert $C^*$-module; Operator equation; Positive solution.}

\begin{abstract}
We extend a work of Pedersen and Takesaki by giving some equivalent conditions for the existence of a positive solution of the so-called Pedersen--Takesaki  operator equation $XHX=K$ in the setting of Hilbert $C^*$-modules. It is known that the Douglas lemma does not hold in the setting of Hilbert $C^*$-modules in its general form. In fact, if $\mathscr{E}$ is a Hilbert $C^*$-module and  $A, B \in  \mathcal{L}(\mathscr E)$, then the operator inequality $B B^*\le \lambda AA^*$ with $\lambda>0$ does not ensure that the operator equation $AX=B$ has a  solution, in general. We show that under a mild orthogonally complemented condition on the range of operators,  $AX=B$ has a solution if and only if $BB^*\leq \lambda AA^*$ and $\mathscr R(A) \supseteq \mathscr R(BB^*)$.  Furthermore, we prove that if $\mathcal{L}(\mathscr E)$ is a $W^*$-algebra,  $A,B\in \mathcal{L}(\mathscr E)$, and $\overline{\mathscr R(A^*)}=\mathscr E$, then $BB^*\leq\lambda AA^*$ for some $\lambda>0$ if and only if $\mathscr R (B)\subseteq \mathscr R(A)$. Several examples are given to support the new findings.
\end{abstract}

\maketitle
%====================================================%
%====================================================%
%====================================================%
\section{Introduction}
A pre-Hilbert $C^*$-module over a $C^*$-algebra $\mathscr A$ is a right $\mathscr A$-module equipped with an $\mathscr A$-valued inner product $\langle \cdot,\cdot\rangle:\mathscr H\times \mathscr H\to \mathscr A$. If $\mathscr H$ is complete with respect to the induced norm $\|x\|=\|\langle x,x\rangle\|^{\frac{1}{2}}$, then $\mathscr H$ is called a Hilbert $C^*$-module.  Let $\mathscr R(A)$ and $\mathscr N(A)$ denote the range and the null space of a linear operator $A$, respectively.  Throughout this note, a capital letter means a linear operator. Let $\mathcal{L}(\mathscr H,\mathscr K)$ stand for the set of operators  $A:\mathscr H\to\mathscr K$ for which there is an operator $A^*:\mathscr K\to \mathscr H$ such that
\[
\langle Ax,y\rangle=\langle x,A^*y\rangle\,,\quad x\in \mathscr H,y\in \mathscr K.
\]
It is known that not every bounded linear operator is adjointable. Let $\mathscr E$ be a Hilbert  $\mathscr A$-module. For every subset $\mathscr S$ of $\mathscr E$, we define $\mathscr S^\perp$ by
\[
\mathscr S^\perp:=\{x\in \mathscr E; \langle x,y\rangle=0,{\rm ~for~all~} y\in \mathscr S\}.
\]
Let $\mathscr M$ be a closed submodule of $\mathscr E$. If $\mathscr E=\mathscr M\oplus \mathscr M^\perp$, then we say that $\mathscr M$ is orthogonally complemented in $\mathscr E$. There are Hilbert $C^*$-modules having a noncomplemented closed submodule; see \cite[Chapter 3]{Lan}. Furthermore, when the range of $A\in \mathcal{L}( \mathscr{H})$ is not closed, there may happen that neither $ {\mathcal N}(A) $ nor $ \overline{{\mathcal R}(A)} $ is orthogonally complemented.  Indeed, the theory Hilbert $C^*$-modules is a nontrivial extension of that of Hilbert spaces utilizing complecated operator algebra techniques; see \cite{man2}.

For an operator $A\in \mathcal{L}(\mathscr F,\mathscr E)$, if the closure of $\mathscr R(A^*)$ is orthogonality complemented, then $\overline{\mathscr R(A^*)}^\perp=\mathscr N(A)$ and $\mathscr F=\overline{\mathscr R(A^*)}^\perp\oplus\mathscr N(A)$. Let $A\in \mathcal{L}(\mathscr F,\mathscr E)$. If $\overline{\mathscr R(A^*)}$ is an orthogonally complemented submodule in $\mathscr F$, then we denote by $N_A$ the projection $I-P_{A^*}$, where $P_{A^*}$ is the projection  $\mathscr F$ onto $\overline{\mathscr R(A^*)}$. Also from \cite[Proposition 3.7]{Lan} (see also \cite{VMX}), we have 
\begin{equation}\label{eqclosed}
\overline{\mathscr R(A)}=\overline{\mathscr R(AA^*)}\,.
\end{equation}
The solvability of operator equations in Hilbert $C^*$-module is one of the important topics in the recent researches. In \cite{XTA}, it was given a necessary  and sufficient condition for the existence of a solution to the equation $AXB=C$.
\begin{theorem}\cite[Theorem 3.1]{XTA}\label{t1}
 Let  $A\in \mathcal{L}(\mathscr E,\mathscr  F),B \in \mathcal{L}(\mathscr G,\mathscr H)$ and $C\in \mathcal{L}(\mathscr G,\mathscr  F)$.
 \begin{itemize}

\item[(i)] If the equation $AXB = C$ has a solution $X \in \mathcal{L}(\mathscr H,\mathscr E)$, then
\[
\mathscr R(C)\subseteq \mathscr R(A)~ and ~\mathscr R(C^*)\subseteq \mathscr  R(B^*).
\]
\item[(ii)] Suppose $\overline{\mathscr R(B)}$ and $\overline{\mathscr R(A^*)}$ are orthogonally complemented submodules of
$\mathscr H$ and $\mathscr E$, respectively. If
\[
\mathscr R(C) \subseteq  \mathscr R(A) ~and~\overline{\mathscr  R(C^*)} \subseteq \mathscr R(B^*)
\]
\[
(or, \overline{\mathscr R(C)} \subseteq \mathscr R(A)~ and ~\mathscr R(C^*) \subseteq \mathscr R(B^*)),
\]
then $AXB = C$ has a unique solution $D\in \mathcal{L}(\mathscr H,\mathscr E)$ such that
\[
\mathscr R(D) \subseteq \mathscr  N(A)^\perp~ and ~\mathscr R(D^*) \subseteq \mathscr N(B^*)^\perp,
\]
which is called the reduced solution, and the general solution to $AXB =C$ is of the form
\[
X = D + N_AV_1 + V_2N_{B^*} ,\quad  where ~V_1, V_2\in \mathcal L(\mathscr H,\mathscr E).
\]
Moreover if at least one of $\mathscr R(A), \mathscr R(B)$, and $\mathscr R(C)$ is a closed submodule,
then the condition $\overline{\mathscr R(C^*)} \subseteq \mathscr R(B^*)$ (or $\overline{\mathscr R(C)}\subseteq \mathscr R(A)$) can be
replaced by $\mathscr R(C^*) \subseteq \mathscr  R(B^*)$ (or $\mathscr R(C) \subseteq \mathscr R(A)$).
\end{itemize}
\end{theorem}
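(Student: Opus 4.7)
The plan is to dispatch (i) by a one-line inclusion chase and to treat (ii) by reducing the two-sided equation $AXB=C$ to two successive one-sided equations, each solved via a Douglas-type construction adapted to the complemented-range hypothesis.

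For part (i), if $AXB=C$ then for every $g$ in the source module one has $Cg=A(XBg)\in \mathscr R(A)$, which gives $\mathscr R(C)\subseteq \mathscr R(A)$. Taking adjoints in $AXB=C$ yields $C^*=B^*X^*A^*$, and the same argument applied to this identity produces $\mathscr R(C^*)\subseteq \mathscr R(B^*)$.

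For part (ii), I would proceed in two stages. First, construct $Y\in \mathcal L(\mathscr G,\mathscr E)$ solving $AY=C$ with $\mathscr R(Y)\subseteq \overline{\mathscr R(A^*)}=\mathscr N(A)^\perp$: the complementation of $\overline{\mathscr R(A^*)}$ makes the restriction $A|_{\overline{\mathscr R(A^*)}}$ injective, so under $\mathscr R(C)\subseteq \mathscr R(A)$ each vector $Cg$ has a unique preimage in $\overline{\mathscr R(A^*)}$, and the assignment $g\mapsto$ (this preimage) is the candidate $Y$. Second, solve $DB=Y$ with $\mathscr R(D^*)\subseteq \overline{\mathscr R(B)}=\mathscr N(B^*)^\perp$ by applying the dual construction to $B^*D^*=Y^*$. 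Composing yields $ADB=AY=C$. Uniqueness of the reduced $D$ follows because any second reduced solution $D'$ would make $A(D-D')B=0$, forcing $(D-D')B$ to lie in $\mathscr N(A)\cap \overline{\mathscr R(A^*)}=\{0\}$, and then $(D-D')^*$ to lie in $\mathscr N(B^*)\cap \overline{\mathscr R(B)}=\{0\}$. The general-form identity $X=D+N_AV_1+V_2N_{B^*}$ then emerges by writing an arbitrary solution as $X=P_{A^*}XP_B+P_{A^*}XN_{B^*}+N_AX$ and recognising $P_{A^*}XP_B$ itself as a reduced solution, hence equal to $D$ by uniqueness.

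The principal obstacle is showing that the pointwise lift defining $Y$ (and similarly $D$) is an \emph{adjointable} bounded module map, not merely an abstract linear assignment. I expect boundedness to come from a closed-graph argument in the Hilbert $C^*$-module setting, and the adjoint to be identified via the identity $\overline{\mathscr R(A)}=\overline{\mathscr R(AA^*)}$ from \eqref{eqclosed}; it is precisely at this identification step that the asymmetric closure hypothesis $\overline{\mathscr R(C^*)}\subseteq \mathscr R(B^*)$ becomes essential, since $\mathscr R(Y^*)$ a priori sits only in $\overline{\mathscr R(C^*)}$ and must be pushed into $\mathscr R(B^*)$ to deliver an exact rather than merely approximate solution in the second stage. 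The symmetric alternative hypothesis swaps the roles of the two stages. The final clause of the theorem, promoting closure inclusions to bare range inclusions when one of $\mathscr R(A),\mathscr R(B),\mathscr R(C)$ is closed, is then the straightforward observation that closedness at the appropriate spot collapses the distinction between $\mathscr R(\cdot)$ and $\overline{\mathscr R(\cdot)}$.
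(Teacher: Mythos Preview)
The paper does not prove this theorem: it is quoted verbatim from \cite[Theorem~3.1]{XTA} (Fang--Yu) as background in the Introduction, with no argument supplied. Consequently there is no ``paper's own proof'' against which to compare your attempt.

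That said, your outline is the standard route and matches how the result is established in the source. Part~(i) is immediate as you say. For part~(ii) the two-stage reduction---first $AY=C$ with $\mathscr R(Y)\subseteq \mathscr N(A)^\perp$, then $B^*D^*=Y^*$ with $\mathscr R(D^*)\subseteq \mathscr N(B^*)^\perp$---is exactly right, and the one-sided step you need is precisely what the present paper records as Theorem~\ref{thFang}. Your observation that $\mathscr R(Y^*)\subseteq \overline{\mathscr R(C^*)}$ (coming from $Y^*A^*=C^*$ and $Y^*|_{\mathscr N(A)}=0$) is the reason the \emph{closure} hypothesis $\overline{\mathscr R(C^*)}\subseteq \mathscr R(B^*)$ is required at the second stage; this is the key subtlety and you have it correctly identified. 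The decomposition $X=P_{A^*}XP_B+P_{A^*}XN_{B^*}+N_AX$ together with uniqueness of the reduced solution gives the general-solution formula cleanly. The only place your sketch is genuinely incomplete is the adjointability of the lift $Y$; rather than a bare closed-graph argument (which gives boundedness but not an adjoint), the cleanest fix is simply to invoke Theorem~\ref{thFang} directly for each one-sided equation, since that result already packages the adjointability verification.
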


The quadratic equation $XA^{-1}X=B$ for positive invertible operators $A$ and $B$ is called the Riccati equation, and its unique positive solution is the geometric mean $A\#B=A^{1/2}(A^{-1/2}BA^{-1/2})^{1/2}A^{1/2}$ of $A$ and $B$; see \cite{and}. 

The Riccati type operator equation $THT=K$ was considered by Pedersen and Takesaki \cite{Ped}. They \cite{Ped2} observed that it is a useful tool for the study of  noncommutative Radon--Nikod\'{y}m theorem. They proved the following interesting result.
\begin{theorem}\cite{Ped}.\label{t4}
Let $\mathscr H$ be a Hilbert space. Let $ H$ and $ K$ be positive operators in the $C^*$-algebra $\mathcal{B}(\mathscr H)$ of all bounded linear operators, and assume that $H$ is invertible. There is then at most one positive operator $X$ in $\mathcal{B}(\mathscr H)$ such that $XHX=K$. A necessary and sufficient condition for the existence of such $X$ is that $(H^{\frac{1}{2}}KH^{\frac{1}{2}})^{\frac{1}{2}}\leq aH$ for some $a>0$, and then $\|X\|\leq a$. This condition will be satisfied if $H$ is invertible or, more generally, if  $K\leq a^2H$ for some $a>0$.
\end{theorem}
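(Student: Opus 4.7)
The plan is to reduce the operator equation $XHX=K$ to a simple positive square-root problem by conjugating with $H^{1/2}$, and then to extract $X$ itself by means of the Douglas factorisation lemma. Concretely, set $Y=H^{1/2}XH^{1/2}$; a direct computation gives
\[
Y^2 = H^{1/2}XHXH^{1/2} = H^{1/2}KH^{1/2},
\]
so $Y$ is forced to be the unique positive square root $(H^{1/2}KH^{1/2})^{1/2}$. Because $H$, hence $H^{1/2}$, is invertible, $X=H^{-1/2}YH^{-1/2}$ is uniquely determined, which settles the uniqueness assertion.

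For necessity of the norm condition, suppose $X\geq 0$ solves $XHX=K$. The identity above gives $H^{1/2}XH^{1/2}=(H^{1/2}KH^{1/2})^{1/2}$, and combining this with the elementary estimate $H^{1/2}XH^{1/2}\leq \|X\|\,H$ yields $(H^{1/2}KH^{1/2})^{1/2}\leq a H$ with $a=\|X\|$.

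For sufficiency, assume $(H^{1/2}KH^{1/2})^{1/2}\leq aH$ and write $Y:=(H^{1/2}KH^{1/2})^{1/2}$. The decisive step is to invoke the classical Hilbert-space Douglas lemma on $Y^{1/2}$: from $Y\leq aH$, that is $Y^{1/2}(Y^{1/2})^*\leq a\,H^{1/2}(H^{1/2})^*$, one obtains $C$ with $Y^{1/2}=H^{1/2}C$ and $\|C\|^{2}\leq a$. Put $X:=CC^*\geq 0$; then $H^{1/2}XH^{1/2}=Y$, so $(H^{1/2}XH^{1/2})^2=H^{1/2}KH^{1/2}$, and invertibility of $H^{1/2}$ converts this into $XHX=K$. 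The estimate $\|X\|=\|C\|^{2}\leq a$ is automatic from the Douglas lemma.

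The last sentence of the theorem is a quick check of two sufficient conditions for $(H^{1/2}KH^{1/2})^{1/2}\leq aH$. If $K\leq a^{2}H$, then $H^{1/2}KH^{1/2}\leq a^{2}H^{2}$, and operator monotonicity of the square root delivers the required inequality; if $H$ is invertible one has $H\geq \|H^{-1}\|^{-1}I$, so any bounded positive operator---in particular $(H^{1/2}KH^{1/2})^{1/2}$---is dominated by a scalar multiple of $H$. The only genuinely delicate step in the whole argument is the sufficiency part, where the Douglas lemma, together with its sharp norm constant, is what yields both the existence of $X$ and the estimate $\|X\|\leq a$; this is precisely the ingredient that fails for general Hilbert $C^*$-modules and thereby motivates the rest of the paper.
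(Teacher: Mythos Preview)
Your argument is correct. Note, however, that the paper does not itself prove Theorem~\ref{t4}; it is quoted from Pedersen--Takesaki as background, so there is no in-paper proof to compare against directly. That said, your proof is precisely the Hilbert-space specialisation of the paper's own argument for its generalisation, Theorem~\ref{t5}: the implication (iii)$\Rightarrow$(i) there applies a Douglas-type factorisation (Theorem~\ref{thFang}) to write $(H^{1/2}KH^{1/2})^{1/4}=H^{1/2}S$ and then sets $T=SS^*$, which is verbatim your step $Y^{1/2}=H^{1/2}C$, $X=CC^*$. Your uniqueness and necessity arguments likewise mirror the computations in Theorem~\ref{t3} and the (i)$\Rightarrow$(iv) step of Theorem~\ref{t5}. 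So the proposal is both sound and methodologically aligned with the paper.
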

Nakamoto \cite{Nak} demonstrated the connection of the equation $XHX=K$ with the Douglas lemma  \cite[Theorem 1]{Dou} when $ H$ and $ K$ are positive operators in $\mathcal{B}(\mathscr H)$; see also \cite{cve, man}.
Furuta \cite{Fur} extended the main result of  Pedersen and Takesaki and showed that if $H, K \in \mathcal{B}(\mathscr H)$ are positive operators, $H$ is nonsingular, and  there exists a positive operator $X$ such that $X(H^{1/n}X)^n=K$ for some natural number $n$, then, for any natural number $m$ such that $m\leq n$, there exists a positive operator $X_1$ such that $X_1(H^{1/m}X_1)^m=K$. 
 
An extension of the Pedersen--Takesaki operator equation has been investigated via the Furuta inequality and the grand Furuta inequality by Yuan and Gao \cite{yg}.  In addition, Shi and Gao \cite{sg}  presented some applications of the Furuta inequality to some Pedersen--Takesaki type operator equations. One of our aims is to extend Theorem \ref{t4} to the setting of Hilbert $C^*$-modules and to provide  some equivalent conditions for the existence of a positive solution of $XHX=K$.

It is known that the Douglas lemma  does not hold in the setting of Hilbert $C^*$-modules in its general form. In fact, for $A, B \in  \mathcal{L}(\mathscr E)$, the operator inequality $B B^*\le \lambda AA^*$ with $\lambda>0$ does not imply that $AX=B$ has a solution, in general; for more information, see \cite{man}. In the present paper, we show that under a simple orthogonally complemented condition on the range of operators,  $AX=B$ has a solution if and only if $BB^*\leq \lambda AA^*$ and $\mathscr R(A) \supseteq \mathscr R(BB^*)$. Furthermore, we prove that if $\mathcal{L}(\mathscr E)$ is a $W^*$-algebra,  $A,B\in \mathcal{L}(\mathscr E)$, and $\overline{\mathscr R(A^*)}=\mathscr E$, then $BB^*\leq\lambda AA^*$ for some $\lambda>0$ if and only if $\mathscr R (B)\subseteq \mathscr R(A)$.

%====================================================%
%====================================================%
%====================================================%
\section{Main Results}

If $A\in \mathcal{L}(\mathscr E) $ has closed range, then the Moore--Penrose inverse of $ A$ \cite{Q} is defined as the unique element $A^\dagger$ of $\mathcal{L}(\mathscr E)$ such that
\[
AA^\dagger A=A\,,\quad A^\dagger AA^\dagger=A^\dagger\,,\quad (A^\dagger A)^*= A^\dagger A\,, \quad (AA^\dagger )^* = AA^\dagger.
\]
We know that $A^\dagger $ exists if and only if $\mathscr R(A)$ is closed. Note that the Moore-Penrose inverse can also be dealt with for some operators whose ranges may be not closed. More precisely, suppose that $A\in \mathcal{L}(\mathscr F)$ and  $\overline{\mathscr R(A^*)}$ is orthogonally complemented in $\mathscr F$. From \cite[Remark 1.1]{AOFA}, the Moore--Penrose inverse $A^\dagger $
can be constructed as the unique operator from $\mathscr R(A)\oplus \mathscr  N(A^*)$ onto $\mathscr R(A^*)$ such
that for every $x\in \mathscr  R(A^*)$ and $y\in \mathscr  N(A^*)$,
\[
A^\dagger Ax = x\,, \quad  A^\dagger y = 0.
\]
We have  $AA^\dagger A = A$ and $A^\dagger AA^\dagger  = A^\dagger$. Note that  $A^\dagger $ may be unbounded (thus $A^\dagger $
may fail to be in $\mathcal{L}(\mathscr E ,\mathscr F )$) and that the operator $A^\dagger A$ is however always bounded, which
is actually equal to the projection from $\mathscr E$ onto $\overline{\mathscr R(A^*)}$.\\
 In the Hilbert space setting,  Douglas lemma gives a necessary and sufficient conditions for solvability of the equation $AX=B$ as follows.
 \begin{theorem}\cite[Theorem 1]{Dou}\label{Douglas}
 Let $A$ and $B$ be bounded linear operators on the Hilbert space $\mathscr H$. The following statements are equivalent:
 \begin{itemize}
 \item[(1)] $\mathscr R(A)\subseteq \mathscr R(B)$.
 \item[(2)] $AA^*\leq \lambda BB^* $ for some $\lambda>0 $.
 \item[(3)] there exists a bounded operator $C$ on $\mathscr H$ such that $A=BC$.
 \end{itemize}
 \end{theorem}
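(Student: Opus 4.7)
The plan is to establish the equivalences by showing $(3)\Rightarrow(1)$, $(3)\Rightarrow(2)$, $(2)\Rightarrow(3)$, and $(1)\Rightarrow(3)$, so that $(3)$ acts as the central hub. The two trivial implications are the ones in which $(3)$ is the hypothesis: if $A=BC$, then $\mathscr R(A)=B(\mathscr R(C))\subseteq\mathscr R(B)$, giving $(1)$; and $AA^*=BCC^*B^*\le\|C\|^2BB^*$, giving $(2)$ with $\lambda=\|C\|^2$.

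The substantive content lies in the two converse directions. For $(2)\Rightarrow(3)$, the natural idea is to build $C^*$ first on $\mathscr R(B^*)$ by declaring $C^*(B^*y):=A^*y$ and then extending by continuity and by zero. The inequality $AA^*\le\lambda BB^*$ translates into
\[
\|A^*y\|^2=\langle AA^*y,y\rangle\le\lambda\langle BB^*y,y\rangle=\lambda\|B^*y\|^2,
\]
which simultaneously shows that $C^*$ is well-defined on $\mathscr R(B^*)$ (any two preimages of a point in $\mathscr R(B^*)$ give the same value of $A^*$) and that it has norm at most $\sqrt{\lambda}$. I would then extend $C^*$ by continuity to $\overline{\mathscr R(B^*)}$ and by zero on $\mathscr N(B)=\mathscr R(B^*)^\perp$, using the orthogonal decomposition of the Hilbert space $\mathscr H$. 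The resulting $C^*$ satisfies $C^*B^*=A^*$, hence $A=BC$.

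For $(1)\Rightarrow(3)$, I would proceed by the closed graph theorem. Since $\mathscr R(A)\subseteq\mathscr R(B)$, for every $x\in\mathscr H$ there exists a unique vector $Cx\in\mathscr N(B)^\perp$ with $B(Cx)=Ax$; this defines a linear map $C:\mathscr H\to\mathscr N(B)^\perp\subseteq\mathscr H$. To see $C$ is bounded, suppose $x_n\to x$ and $Cx_n\to z$. Then $ACx_n$ converges to both $Ax$ (by continuity of $A$) and to $Bz$ (by continuity of $B$), forcing $Bz=Ax$; since also $z\in\mathscr N(B)^\perp$ (a closed subspace), the uniqueness characterisation of $Cx$ yields $z=Cx$, so the graph of $C$ is closed.

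The main obstacle, and the step that genuinely uses Hilbert space structure rather than being a formal manipulation, is the continuous extension in $(2)\Rightarrow(3)$: one needs both the orthogonal decomposition $\mathscr H=\overline{\mathscr R(B^*)}\oplus\mathscr N(B)$ and the density of $\mathscr R(B^*)$ in its closure to pass from the bound on $\mathscr R(B^*)$ to a genuine bounded operator on all of $\mathscr H$. It is precisely this two-ingredient argument, together with the closed graph step in $(1)\Rightarrow(3)$, that fails in the general Hilbert $C^*$-module setting, motivating the orthogonal complementation hypotheses imposed elsewhere in the paper.
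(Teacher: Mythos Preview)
The paper does not supply its own proof of this theorem: it is quoted verbatim as \cite[Theorem~1]{Dou} and serves only as background for the Hilbert $C^*$-module generalisations that follow. So there is nothing in the paper to compare your argument against.

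That said, your proof is correct and is essentially Douglas's original argument: the construction of $C^*$ on $\mathscr R(B^*)$ via $C^*(B^*y)=A^*y$, with well-definedness and boundedness read off from $\|A^*y\|^2\le\lambda\|B^*y\|^2$, followed by extension by continuity to $\overline{\mathscr R(B^*)}$ and by zero on $\mathscr N(B)$, is exactly how Douglas handles $(2)\Rightarrow(3)$; and the closed-graph argument for $(1)\Rightarrow(3)$ is the standard route. One small slip: in your closed-graph step you write ``$ACx_n$ converges to both $Ax$ \ldots\ and to $Bz$'', but the operator applied to $Cx_n$ should be $B$, not $A$. The intended chain is $BCx_n=Ax_n\to Ax$ (continuity of $A$ and $x_n\to x$) and $BCx_n\to Bz$ (continuity of $B$ and $Cx_n\to z$), whence $Bz=Ax$. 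With that correction the argument is complete.
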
 
 In the Hilbert $C^*$-module setting, we have the following theorem.
\begin{theorem}\cite[Theorem 2.3]{MR}\label{thFang}
Let $C\in \mathcal{L}(\mathscr G,\mathscr  F), A \in \mathcal{L}(\mathscr E,\mathscr  F)$. Suppose that $\overline{\mathscr R(A^*)}$ is orthogonally complemented. The following statements are equivalent:
\begin{itemize}
\item[(1)]$\mathscr R(C)\subseteq \mathscr R(A)$.
\item[(2)] There exists $D\in \mathcal{L}(\mathscr G, \mathscr E)$ such that $AD=C$, i.e., $AX=C$ has a solution $X$.
\end{itemize}
And if one of the above statements holds, then
$$CC^*\leq  \lambda AA^*,\qquad {\rm for~ some~} \lambda > 0.$$
In this case, there exists a unique operator $X$ satisfying $\mathscr R(X)\subseteq \mathscr N(A)^\perp$, which is called the reduced solution, denoted by $D$, and is defined as follows:
$$
D = P_{A^*}A^{-1}C,\quad  D^*|_{\mathscr N(A)} = 0, \quad D^*A^*y = C^*y\quad  (y\in \mathscr  F),
$$
where $A^{-1}$ does not refer to the inverse of $A$ but is the expression of inverse image.
\end{theorem}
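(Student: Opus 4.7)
The plan is to dispose of (2) $\Rightarrow$ (1) at once from $\mathscr R(AD)\subseteq \mathscr R(A)$, and to concentrate on (1) $\Rightarrow$ (2) by constructing the reduced solution explicitly. The hypothesis that $\overline{\mathscr R(A^*)}$ is orthogonally complemented gives the decomposition $\mathscr E=\overline{\mathscr R(A^*)}\oplus \mathscr N(A)$, and in particular $A$ is injective on $\overline{\mathscr R(A^*)}$ because $\mathscr N(A)\cap \overline{\mathscr R(A^*)} = \{0\}$. For each $g\in \mathscr G$, the assumption $\mathscr R(C)\subseteq \mathscr R(A)$ supplies some $e\in \mathscr E$ with $Ae = Cg$, and I would define $Dg := P_{A^*}e$; injectivity makes $Dg$ independent of the choice of $e$, and by construction $ADg = Cg$ and $\mathscr R(D)\subseteq \overline{\mathscr R(A^*)} = \mathscr N(A)^\perp$, matching the formal description $D = P_{A^*}A^{-1}C$ in the statement.

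Boundedness of $D$ would come from the closed graph theorem: if $g_n\to g$ and $Dg_n\to x$, then $ADg_n = Cg_n\to Ax$ gives $Ax = Cg$, and since $\overline{\mathscr R(A^*)}$ is closed, $x\in\overline{\mathscr R(A^*)}$, hence $x = Dg$ by injectivity. For adjointability, I would first observe that $\mathscr N(A^*)\subseteq \mathscr N(C^*)$: any $y\in \mathscr N(A^*)$ is orthogonal to $\mathscr R(A)\supseteq\mathscr R(C)$, so $C^*y = 0$. This well-defines a candidate adjoint on the dense subspace $\mathscr R(A^*)+\mathscr N(A)$ of $\mathscr E$ by
\[
\widetilde{D}^*(A^*y + n) := C^*y, \qquad y\in \mathscr F,\ n\in \mathscr N(A).
\]
Applying the module Cauchy--Schwarz inequality to $\|\langle g, C^*y\rangle\| = \|\langle Dg, A^*y\rangle\|$ yields $\|C^*y\|\leq \|D\|\,\|A^*y\|$, while $\langle A^*y, n\rangle = \langle y, An\rangle = 0$ combined with $\|\alpha + \beta\|\geq \|\alpha\|$ for positive $\alpha, \beta$ in a $C^*$-algebra gives $\|A^*y\|\leq \|A^*y + n\|$. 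Hence $\widetilde{D}^*$ is bounded by $\|D\|$ and extends by continuity to the desired $D^*\in \mathcal{L}(\mathscr E,\mathscr G)$ with $D^*A^*y = C^*y$ and $D^*|_{\mathscr N(A)}=0$.

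Uniqueness of a solution with range in $\mathscr N(A)^\perp$ is immediate, since any two such differ by an operator whose range lies in $\mathscr N(A)\cap \overline{\mathscr R(A^*)} = \{0\}$. The inequality $CC^* \leq \lambda AA^*$ follows at once from $C = AD$ via $CC^* = ADD^*A^* \leq \|D\|^2 AA^*$, so $\lambda = \|D\|^2$ suffices. The main obstacle I foresee is the adjointability step: one must bound the candidate adjoint without prior access to any inequality like $CC^*\leq \lambda AA^*$, and the key point is that the boundedness of $D$, secured by the closed graph theorem, already encodes exactly the information the Cauchy--Schwarz estimate needs. The orthogonal-complementation hypothesis plays a double role, providing both the decomposition $\mathscr E=\overline{\mathscr R(A^*)}\oplus \mathscr N(A)$ and, via the density of $\mathscr R(A^*)+\mathscr N(A)$ in $\mathscr E$, the framework for extending $\widetilde{D}^*$.
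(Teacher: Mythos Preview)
The paper does not contain its own proof of this theorem: it is quoted verbatim from \cite[Theorem~2.3]{MR} and stated without argument, serving only as a tool for the results that follow. So there is no in-paper proof to compare against.

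That said, your argument is correct and is essentially the natural one. A few points worth making explicit for a clean write-up: (a) before invoking the closed graph theorem you should record that $D$ is $\mathscr A$-linear, which follows from the same injectivity-of-$A$-on-$\overline{\mathscr R(A^*)}$ trick (if $Ae=Cg$ and $Ae'=C(ga)$ then $e'=ea$ works, and $P_{A^*}$ is $\mathscr A$-linear); (b) the passage from $\|\langle g, C^*y\rangle\|\le \|D\|\,\|g\|\,\|A^*y\|$ to $\|C^*y\|\le \|D\|\,\|A^*y\|$ uses the identity $\|x\|=\sup_{\|g\|\le 1}\|\langle g,x\rangle\|$ in a Hilbert $C^*$-module, which holds because the supremum is attained at $g=x/\|x\|$; (c) after extending $\widetilde{D}^*$ by continuity you should verify the adjoint relation $\langle Dg,x\rangle=\langle g,D^*x\rangle$ on the dense set $\mathscr R(A^*)+\mathscr N(A)$ and then pass to all of $\mathscr E$, which is routine since $\langle Dg,n\rangle=0$ for $n\in\mathscr N(A)$ by $\mathscr R(D)\subseteq\mathscr N(A)^\perp$. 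With these small additions the proof is complete.
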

\begin{theorem}\cite[Theorem 2.1]{AOFA}\label{RA}
 Let $A\in \mathcal  L(\mathscr H ,\mathscr K ), B\in \mathcal L(\mathscr L,\mathscr  G )$  be such that $\overline{R(A^*)}$ and $\overline R(B)$
are orthogonally complemented. Then for every $C\in \mathcal L(\mathscr L, \mathscr K )$, the equation
\begin{equation}\label{eqp}
AXB = C, \qquad X\in\mathcal  L(\mathscr G ,\mathscr H ) 
\end{equation}
has a solution if and only if
\[
\mathscr R(C)\subseteq \mathscr R(A)  \qquad \textrm{and}\qquad \mathscr R\left(A^\dagger C)^*\right)\subseteq \mathscr R(B^*).
\]
\end{theorem}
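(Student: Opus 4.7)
The plan is to reduce the two-sided equation $AXB=C$ to two successive one-sided Douglas-type problems and then apply Theorem \ref{thFang} twice. Concretely, I would first solve $AY=C$ for some adjointable $Y$, and then solve $XB=Y$ for $X$. The two orthogonal complementation hypotheses (on $\overline{\mathscr R(A^*)}$ and on $\overline{\mathscr R(B)}$) are precisely what is needed to invoke the Hilbert $C^*$-module Douglas lemma at each of these two stages.

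For the necessity direction, suppose $X\in\mathcal{L}(\mathscr G,\mathscr H)$ satisfies $AXB=C$. Then $\mathscr R(C)\subseteq \mathscr R(A)$ is immediate, since $Cy=A(XBy)$ for every $y\in\mathscr L$. For the second inclusion I would use the identity $A^\dagger A=P_{A^*}$ recorded in the paragraph introducing the (possibly unbounded) Moore--Penrose inverse. Left-multiplying $AXB=C$ by $A^\dagger$, which is legitimate because $\mathscr R(C)\subseteq \mathscr R(A)$ lies in the domain of $A^\dagger$, gives $A^\dagger C=P_{A^*}XB$, so that $(A^\dagger C)^*=B^*X^*P_{A^*}$, and therefore $\mathscr R((A^\dagger C)^*)\subseteq \mathscr R(B^*)$.

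For sufficiency, assume both inclusions. Applying Theorem \ref{thFang} to $AY=C$, the complementation of $\overline{\mathscr R(A^*)}$ combined with $\mathscr R(C)\subseteq \mathscr R(A)$ produces the reduced solution $D\in\mathcal{L}(\mathscr L,\mathscr H)$ with $AD=C$; this $D$ is exactly what the symbol $A^\dagger C$ means in the statement. I would then apply Theorem \ref{thFang} a second time, this time to $B^*Z=D^*$: the role of $A$ is played by $B^*$, and the hypothesis that $\overline{\mathscr R((B^*)^*)}=\overline{\mathscr R(B)}$ be orthogonally complemented is exactly the remaining assumption. Together with $\mathscr R(D^*)=\mathscr R((A^\dagger C)^*)\subseteq \mathscr R(B^*)$, this yields $Z\in\mathcal{L}(\mathscr H,\mathscr G)$ with $B^*Z=D^*$. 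Setting $X:=Z^*$ gives $XB=(B^*Z)^*=D$, and hence $AXB=AD=C$.

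The main technical subtlety is the meaning of $A^\dagger C$ when $\mathscr R(A)$ fails to be closed, since in that case $A^\dagger$ itself is merely an unbounded operator defined on $\mathscr R(A)\oplus\mathscr N(A^*)$ and need not belong to $\mathcal{L}(\mathscr K,\mathscr H)$. The clean way around this is to interpret $A^\dagger C$ throughout as the reduced solution to $AY=C$ supplied by Theorem \ref{thFang}, which is automatically adjointable even when $A^\dagger$ is not. Once this identification is in place, both invocations of Theorem \ref{thFang} are routine and the argument closes cleanly.
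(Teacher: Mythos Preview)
The paper does not actually prove Theorem~\ref{RA}; it is quoted verbatim from \cite[Theorem~2.1]{AOFA} and used as a black box in the proofs of Theorems~\ref{t3} and \ref{t5}. So there is no ``paper's own proof'' to compare against.

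That said, your argument is correct and is the natural one. The two-stage reduction---solve $AY=C$ via Theorem~\ref{thFang}, then solve $B^*Z=D^*$ via Theorem~\ref{thFang} applied with $B^*$ in the role of $A$---uses exactly the two complementation hypotheses, one per stage, and your identification of $A^\dagger C$ with the reduced solution $D$ is the right way to make sense of the statement when $A^\dagger$ is unbounded. For the necessity direction, a slightly cleaner phrasing than ``left-multiply by $A^\dagger$'' is to observe directly that $P_{A^*}XB$ satisfies $A(P_{A^*}XB)=AXB=C$ and has range in $\mathscr N(A)^\perp$, hence equals the reduced solution $D=A^\dagger C$ by uniqueness; then $(A^\dagger C)^*=B^*X^*P_{A^*}$ gives the second range inclusion immediately. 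This avoids any worry about composing unbounded operators, though your version is fine once one recalls that $A^\dagger A=P_{A^*}$ is bounded.
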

\begin{theorem}
Let $A\in \mathcal{L}(\mathscr E,\mathscr F)$ and let $C\in \mathcal{L}(\mathscr F)$. Let $\overline{\mathscr R(A^*)}$ be an orthogonally complemented closed submodule of $\mathscr E$. Then the following assertions are equivalent:
\begin{itemize}
\item[(i)] $AXA^*=C$ has a positive solution $X$.
\item[(ii)] $\mathscr R(C)\subseteq \mathscr R(A), \mathscr R(A^\dagger C)^*\subseteq \mathscr R(A)$, and $C\geq 0$.
\end{itemize}
\end{theorem}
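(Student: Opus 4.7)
The plan is to handle the two implications separately: Theorem \ref{RA} supplies existence of a (not necessarily positive) solution, and a symmetrize-and-compress recipe upgrades it to a positive one.

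For $(\mathrm{i})\Rightarrow(\mathrm{ii})$, if $X\geq 0$ solves $AXA^*=C$, then taking a positive square root $X^{1/2}$ in the $C^*$-algebra $\mathcal{L}(\mathscr E)$ factors
\[
C=AXA^*=(AX^{1/2})(AX^{1/2})^*\geq 0.
\]
The two range inclusions in (ii) are then the necessity half of Theorem \ref{RA} applied with $B=A^*$, whose hypothesis $\overline{\mathscr R(B)}=\overline{\mathscr R(A^*)}$ orthogonally complemented is already in force.

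For $(\mathrm{ii})\Rightarrow(\mathrm{i})$, the three conditions in (ii) are exactly the sufficient assumptions of Theorem \ref{RA} with $B=A^*$, so that theorem yields some solution $X_1\in\mathcal{L}(\mathscr E)$. Since $C=C^*$, the adjoint $X_1^*$ is also a solution, hence so is the self-adjoint operator $\widetilde X=\tfrac12(X_1+X_1^*)$. I then compress by setting $X_0=P_{A^*}\widetilde XP_{A^*}$. Because $N_A=I-P_{A^*}$ projects onto $\mathscr N(A)=\overline{\mathscr R(A^*)}^\perp$, one has $AN_A=0$, i.e.\ $AP_{A^*}=A$ and $P_{A^*}A^*=A^*$; this gives $AX_0A^*=A\widetilde XA^*=C$, and by construction $X_0=X_0^*=P_{A^*}X_0P_{A^*}$.

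It remains to show $X_0\geq 0$. For any $u\in\mathscr F$,
\[
\langle X_0A^*u,A^*u\rangle=\langle AX_0A^*u,u\rangle=\langle Cu,u\rangle\geq 0,
\]
so $\langle X_0y,y\rangle\geq 0$ for every $y\in\mathscr R(A^*)$. By norm continuity of $\langle X_0\cdot,\cdot\rangle$ and norm closedness of the positive cone in the $C^*$-algebra, this extends to every $y\in\overline{\mathscr R(A^*)}$. For an arbitrary $x\in\mathscr E$, the identity $X_0=P_{A^*}X_0P_{A^*}$ together with self-adjointness of $P_{A^*}$ collapses $\langle X_0x,x\rangle$ to $\langle X_0P_{A^*}x,P_{A^*}x\rangle\geq 0$. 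The step I anticipate as the main obstacle is precisely this last positivity argument: in a $C^*$-module one must carefully separate $\mathscr R(A^*)$ from its closure (a distinction that is harmless in Hilbert spaces but genuine here) and invoke norm closedness of the positive cone in $\mathcal{L}(\mathscr E)$ to pass from positivity on the range to positivity on its closure, and then onward to all of $\mathscr E$ via the compression identity.
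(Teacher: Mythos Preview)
Your proof is correct and follows the same overall strategy as the paper: invoke Theorem~\ref{RA} for both implications, and for $(\mathrm{ii})\Rightarrow(\mathrm{i})$ verify positivity first on $\mathscr R(A^*)$ via $\langle AX_0A^*u,u\rangle=\langle Cu,u\rangle\geq 0$, then extend to all of $\mathscr E$ using the decomposition $\mathscr E=\overline{\mathscr R(A^*)}\oplus\mathscr N(A)$.

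The one genuine methodological difference is in how the candidate positive solution is produced. The paper takes the explicit reduced solution $X=(A^\dagger(A^\dagger C)^*)^*$ furnished by Theorem~\ref{RA} and checks by direct calculation with $A^\dagger$ that both $\mathscr R(X)$ and $\mathscr R(X^*)$ lie in $\mathscr N(A)^\perp$; these range inclusions are what drive the cross-term cancellations in the final positivity step. You instead take \emph{any} solution $X_1$, symmetrize to $\widetilde X=\tfrac12(X_1+X_1^*)$, and then compress to $X_0=P_{A^*}\widetilde X P_{A^*}$, which builds in self-adjointness and the range conditions automatically and collapses the positivity check to a single line via $\langle X_0x,x\rangle=\langle X_0P_{A^*}x,P_{A^*}x\rangle$. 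Your route avoids all the $A^\dagger$ bookkeeping and makes explicit the continuity passage from $\mathscr R(A^*)$ to its closure (a step the paper leaves implicit); the paper's route has the advantage of exhibiting a concrete formula for the solution.
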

\begin{proof}
(i)$\Rightarrow$(ii). In virtue of  \cite[Theorem 2.1]{AOFA} it is  clear. \\
(ii)$\Rightarrow$(i). It follows from  $\mathscr R (C)\subseteq \mathscr R(A)$, $\mathscr R(A^\dagger C)^*\subseteq \mathscr R(A)$, and  \cite[Theorem 2.1]{AOFA} that  $X=(A^\dagger (A^\dagger C)^*)^*$ is a solution to  the equation $AXA^*=C$. We observe that $\mathscr R(X^*)\subseteq \mathscr N(A)^\perp$ and $\mathscr R(X)\subseteq \mathscr N(A)^\perp$. Indeed, for every $x\in \mathscr E$ and every  $z\in \mathscr N(A)$, we observe that there exists $y\in\mathscr E$ such that
\begin{align*}
\langle X^*x,z\rangle &=\langle (A^\dagger (A^\dagger C)^*)x,z\rangle=\langle A^\dagger Ay,z\rangle=0.
\end{align*}
Moreover, we have
\begin{align*}
\langle Xx,z\rangle &=\langle (A^\dagger (A^\dagger C)^*)^*x,z\rangle\\\nonumber
&=\langle x,A^\dagger (A^\dagger C)^*(I-A^\dagger A)z\rangle \qquad(\rm{since}~z\in \mathscr R(I-A^\dagger A))\\\nonumber
&=\langle x,(A^\dagger (A^\dagger C)^*-A^\dagger (A^\dagger C)^*(A^\dagger A))z\rangle\\\nonumber
&=\langle x,(A^\dagger (A^\dagger C)^*-A^\dagger (A^\dagger AA^\dagger C)^*)z\rangle\\\nonumber
&=\langle x,(A^\dagger (A^\dagger C)^*-A^\dagger (A^\dagger C)^*)z\rangle =0.\\\nonumber
\end{align*}
Now we claim that $X\geq 0$. To show this, let $x\in \mathscr R(A^*)$. Then for some $y\in \mathscr F$, we have
\begin{eqnarray}\label{po}
\langle Xx,x\rangle =\langle XA^*y,A^*y\rangle = \langle AXA^*y,y\rangle =\langle Cy,y\rangle\geq 0\,.
\end{eqnarray}
Since $\mathscr R(X)\subseteq \mathscr N(A)^\perp$ and $\mathscr R(X^*)\subseteq \mathscr N(A)^\perp$  for any $z\in \mathscr N(A)$ and $x\in \overline{\mathscr R(A^*)}$, we have
\begin{align}\label{po2}
\langle Xz,z\rangle =0 \qquad\textrm{and}\qquad  \langle X^*x,z\rangle =0\,.
\end{align}
Let $x\in \mathscr E$.
Since $\mathscr E=\overline{\mathscr R(A^*)}\oplus \mathscr N(A)$, we can write $x=x_1+x_2$ with $x_1\in \overline{\mathscr R(A^*)}$ and $x_2\in \mathscr N(A)$. Hence
\begin{align*}
\langle Xx,x\rangle &= \langle X(x_1+x_2),x_1+x_2\rangle\\
&=\langle Xx_1,x_1\rangle+\langle Xx_1,x_2\rangle+\langle Xx_2,x_2\rangle+\langle Xx_2,x_1\rangle\\
&=\langle Xx_1,x_1\rangle+\langle x_2,X^*x_1\rangle\qquad(\rm by \eqref{po2})\\
&=\langle Xx_1,x_1\rangle\geq 0\qquad (\rm by \eqref{po}).
\end{align*}
\end{proof}
We say that a self-adjoint operator $H\in \mathcal{L}(\mathscr E)$ is nonsingular if $\overline{\mathscr R(H)}=\mathscr E$. In this case, we have $\mathscr N(H)=\{0\}$. Note that if $H$ is a positive nonsingular operator, then,  \eqref{eqclosed} implies that
\[
\overline{\mathscr R(H^{\frac{1}{2}})}=\overline{\mathscr R(H)}=\mathscr E\,.
\]
This shows $H^{\frac{1}{2}}$ is also  nonsingular.
\begin{theorem}\label{t3}
Let $H,K\in \mathcal{L}(\mathscr E)$ with $H\geq 0$. Let $\overline{\mathscr R(H)}$ be an orthogonally complemented closed submodule of $\mathscr E$. If
the equation
\begin{equation}\label{eq1}
XHX=K
\end{equation}
has a solution $X$ in $\mathcal{L}(\mathscr E)$, then
\begin{equation}\label{eq2}
\mathscr R(H^{\frac{1}{2}}KH^{\frac{1}{2}})^{\frac{1}{2}}\subseteq \mathscr R(H^{\frac{1}{2}})\,,\qquad\mathscr R(H^{\frac{1}{2}\dagger}(H^{\frac{1}{2}}KH^{\frac{1}{2}})^{\frac{1}{2}})^*\subseteq \mathscr R(H^{\frac{1}{2}}).
\end{equation}
If $H$ is nonsingular, then \eqref{eq2} is sufficient for solvability of \eqref{eq1}, so in this case  \eqref{eq1} has a unique solution.
\end{theorem}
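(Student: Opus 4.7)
The plan rests on a single identity: if $X\ge 0$ solves $XHX=K$, then $T:=H^{1/2}XH^{1/2}$ is positive and $T^2=H^{1/2}XHXH^{1/2}=H^{1/2}KH^{1/2}$, so by uniqueness of the positive square root,
\[
H^{1/2}XH^{1/2}=(H^{1/2}KH^{1/2})^{1/2}.
\]
Both directions of the theorem will flow from this identity.

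For necessity, the identity immediately gives $\mathscr R((H^{1/2}KH^{1/2})^{1/2})=\mathscr R(H^{1/2}XH^{1/2})\subseteq \mathscr R(H^{1/2})$, which is the first inclusion in \eqref{eq2}. Since $\overline{\mathscr R(H^{1/2})}=\overline{\mathscr R(H)}$ by \eqref{eqclosed}, the hypothesis that $\overline{\mathscr R(H)}$ is orthogonally complemented makes $H^{\frac{1}{2}\dagger}$ available, with $H^{\frac{1}{2}\dagger}H^{1/2}=P_{\overline{\mathscr R(H^{1/2})}}$. Multiplying the identity on the left by $H^{\frac{1}{2}\dagger}$ yields $H^{\frac{1}{2}\dagger}(H^{1/2}KH^{1/2})^{1/2}=P_{\overline{\mathscr R(H^{1/2})}}XH^{1/2}$, and its adjoint $H^{1/2}XP_{\overline{\mathscr R(H^{1/2})}}$ visibly has range in $\mathscr R(H^{1/2})$, giving the second inclusion.

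For sufficiency when $H$ is nonsingular, \eqref{eqclosed} forces $\overline{\mathscr R(H^{1/2})}=\mathscr E$, so $H^{1/2}$ is self-adjoint with dense range and zero kernel. I would invoke Theorem \ref{RA} with $A=B=H^{1/2}$ and $C=(H^{1/2}KH^{1/2})^{1/2}$: the orthogonal complementation hypotheses are automatic since $\overline{\mathscr R(A^*)}=\overline{\mathscr R(B)}=\mathscr E$, while $\mathscr R(C)\subseteq\mathscr R(A)$ and $\mathscr R((A^\dagger C)^*)\subseteq\mathscr R(B^*)$ are exactly the hypotheses in \eqref{eq2}. This produces $X\in\mathcal L(\mathscr E)$ satisfying $H^{1/2}XH^{1/2}=(H^{1/2}KH^{1/2})^{1/2}$. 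Squaring gives $H^{1/2}(XHX-K)H^{1/2}=0$, and a cancellation step, enabled by $H^{1/2}$ having dense range and zero kernel, forces $XHX=K$.

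Positivity of the constructed $X$ drops out from $\langle XH^{1/2}y,H^{1/2}y\rangle=\langle(H^{1/2}KH^{1/2})^{1/2}y,y\rangle\ge 0$ for every $y\in\mathscr E$: this yields $\langle Xz,z\rangle\ge 0$ on the dense submodule $\mathscr R(H^{1/2})$, hence on all of $\mathscr E$. Uniqueness of the positive solution follows by the same mechanism: any two positive solutions $X_1,X_2$ satisfy $H^{1/2}X_iH^{1/2}=(H^{1/2}KH^{1/2})^{1/2}$ by uniqueness of the positive square root, and dense-range cancellation then gives $X_1=X_2$. The main subtlety I foresee is verifying the $C^*$-module cancellation $H^{1/2}YH^{1/2}=0\Rightarrow Y=0$, but this is a direct continuity argument: the $\mathscr A$-valued inner product is continuous in each variable, so $\langle YH^{1/2}u,H^{1/2}v\rangle=0$ extends from the dense set $\mathscr R(H^{1/2})\times\mathscr R(H^{1/2})$ to all of $\mathscr E\times\mathscr E$.
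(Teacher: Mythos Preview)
Your argument follows the paper's: both rest on the identity $H^{1/2}XH^{1/2}=(H^{1/2}KH^{1/2})^{1/2}$ and then invoke Theorem~\ref{RA} to pass between solvability of $H^{1/2}XH^{1/2}=(H^{1/2}KH^{1/2})^{1/2}$ and the two range conditions in \eqref{eq2}. The differences are minor. For the second inclusion in \eqref{eq2} you give an explicit computation via left-multiplication by $H^{1/2\dagger}$, whereas the paper simply reads both inclusions off Theorem~\ref{RA}. For uniqueness you argue by dense-range cancellation, whereas the paper invokes Theorem~\ref{t1}: any solution of $H^{1/2}XH^{1/2}=(H^{1/2}KH^{1/2})^{1/2}$ has the form $D+N_{H^{1/2}}V_1+V_2N_{H^{1/2}}$, and nonsingularity of $H$ forces $N_{H^{1/2}}=0$. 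Both routes work.

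One discrepancy to flag: you assume $X\ge 0$ from the outset and only establish uniqueness among \emph{positive} solutions, while the theorem as written does not impose positivity on $X$. In fact the paper's own proof takes the same square-root step for an arbitrary solution $T$, which is unjustified unless $H^{1/2}TH^{1/2}\ge 0$; and unqualified uniqueness is actually false (take $H=I$ and $K>0$: both $\pm K^{1/2}$ solve $X^2=K$). So your positivity reading is the intended one, matching Theorems~\ref{t4} and~\ref{t5}. Your final paragraph showing the constructed solution is positive is extra here; it is not part of the present theorem and is essentially the (ii)$\Rightarrow$(i) step the paper carries out in Theorem~\ref{t5}.
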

\begin{proof}
Suppose that \eqref{eq1} is solvable. Let $T\in \mathcal L(\mathscr E)$ be such that $THT=K$. Then  
\[
\left(H^{\frac{1}{2}}TH^{\frac{1}{2}}\right)^2=H^{\frac{1}{2}}THTH^{\frac{1}{2}}=H^{\frac{1}{2}}KH^{\frac{1}{2}}\,.
\]
Hence $H^{\frac{1}{2}}TH^{\frac{1}{2}}=\left(H^{\frac{1}{2}}KH^{\frac{1}{2}}\right)^{\frac{1}{2}}$. By setting $A=B=H^{\frac{1}{2}}$ and $C=\left(H^{\frac{1}{2}}KH^{\frac{1}{2}}\right)^{\frac{1}{2}}$ and employing  Theorem \ref{t1}, we get 
\begin{align}
T=D+ N_{A}V_1 + V_2N_{A} ,
\end{align}
where  $V_1, V_2\in \mathcal L(\mathscr E)$ and $D$ is the reduced solution. Since $H$ is nonsingular, $\overline{\mathscr R(H^{\frac{1}{2}})}=\mathscr E$. Thus $N_A=N_{H^{\frac{1}{2}}}=0$. Hence $T=D$. This shows that $XHX=K$ has at most one solution.\\
Let $T\in \mathcal{L}(\mathscr E)$ be a solution to \eqref{eq1}. Then  
\[
H^{\frac{1}{2}}KH^{\frac{1}{2}}=H^{\frac{1}{2}}THTH^{\frac{1}{2}}=(H^{\frac{1}{2}}TH^{\frac{1}{2}})^2.
\]
This shows that
\[
H^{\frac{1}{2}}TH^{\frac{1}{2}}=(H^{\frac{1}{2}}KH^{\frac{1}{2}})^{\frac{1}{2}}.
\]
Hence, Theorem \ref{RA} yields \eqref{eq2}.\\
 Now, let $H$ be nonsingular and let \eqref{eq2} hold.
Making use of Theorem \ref{RA}, we derive that there is $T\in \mathcal{L}(\mathscr E)$ such that
\[
H^\frac{1}{2}TH^{\frac{1}{2}}=(H^{\frac{1}{2}}KH^{\frac{1}{2}})^{\frac{1}{2}}.
\]
Therefore, we get
\[
H^{\frac{1}{2}}THTH^{\frac{1}{2}}=H^{\frac{1}{2}}KH^{\frac{1}{2}}.
\]
The nonsingularity of $H$ gives $THT=K$.
\end{proof}

Theorem \ref{t4} provides a necessary and sufficient condition for the existence of a solution to $XHX=K$ in the framework of Hilbert spaces. The next result extends the theorem to the setting of Hilbert $C^*$-modules.
\begin{theorem}\label{t5}
Let $H,K\in \mathcal{L}(\mathscr E)$ be positive operators. Let $H$ be a nonsingular operator and let $\overline{\mathscr R(H^{\frac{1}{2}}KH^{\frac{1}{2}})}$ be an orthogonally complemented submodule of $\mathscr E$. Then the following assertions are equivalent:
\begin{itemize}
\item[(i)] Equation \eqref{eq1} has a positive solution.
\item[(ii)]  $\mathscr R(H^{\frac{1}{2}}KH^{\frac{1}{2}})^{\frac{1}{2}}\subseteq \mathscr R(H^{\frac{1}{2}})$ and $\mathscr R(H^{\frac{1}{2}\dagger}(H^{\frac{1}{2}}KH^{\frac{1}{2}})^{\frac{1}{2}})^*\subseteq \mathscr R(H^{\frac{1}{2}})$.
\item[(iii)] $\mathscr R(H^{\frac{1}{2}}KH^{\frac{1}{2}})^{\frac{1}{4}}\subseteq \mathscr R(H^{\frac{1}{2}})$.
\item[(iv)] $\mathscr R(H^{\frac{1}{2}}KH^{\frac{1}{2}})^{\frac{1}{2}}\subseteq \mathscr R(H^{\frac{1}{2}})$ and $(H^{\frac{1}{2}}KH^{\frac{1}{2}})^{\frac{1}{2}}\leq \lambda H$ for some scalar $\lambda>0$.
\end{itemize}
\end{theorem}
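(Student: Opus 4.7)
My plan is to prove the chain (ii) $\Rightarrow$ (i) $\Rightarrow$ (iv) $\Rightarrow$ (iii) $\Rightarrow$ (ii), writing $S:=(H^{\frac{1}{2}}KH^{\frac{1}{2}})^{\frac{1}{2}}$ throughout. The nonsingularity of $H$ will be exploited repeatedly through the identity $(H^{\frac{1}{2}})^\dagger H^{\frac{1}{2}}=I_{\mathscr E}$, which follows from $\overline{\mathscr R(H^{\frac{1}{2}})}=\mathscr E$ together with the discussion preceding Theorem \ref{t3}.

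For (ii) $\Rightarrow$ (i), I will feed (ii) into Theorem \ref{t3} to produce a unique $T\in \mathcal{L}(\mathscr E)$ satisfying $THT=K$, together with the companion identity $H^{\frac{1}{2}}TH^{\frac{1}{2}}=S$. Taking adjoints in this identity and using $S=S^*$ shows that $T^*$ is again a solution, so uniqueness forces $T=T^*$. Positivity then drops out by computing $\langle TH^{\frac{1}{2}}y, H^{\frac{1}{2}}y\rangle=\langle Sy,y\rangle\geq 0$ on the dense submodule $\mathscr R(H^{\frac{1}{2}})$ and invoking continuity of $z\mapsto \langle Tz,z\rangle$. For (i) $\Rightarrow$ (iv), the identity $(H^{\frac{1}{2}}XH^{\frac{1}{2}})^2=H^{\frac{1}{2}}KH^{\frac{1}{2}}$ together with the positivity of $H^{\frac{1}{2}}XH^{\frac{1}{2}}$ forces $H^{\frac{1}{2}}XH^{\frac{1}{2}}=S$ by uniqueness of the positive square root, yielding $\mathscr R(S)\subseteq \mathscr R(H^{\frac{1}{2}})$ and $S\leq \|X\|H$.

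For (iii) $\Rightarrow$ (ii), Theorem \ref{thFang} (whose orthogonal complement hypothesis is automatic here because $\overline{\mathscr R(H^{\frac{1}{2}})}=\mathscr E$) produces $F\in\mathcal L(\mathscr E)$ with $H^{\frac{1}{2}}F=S^{\frac{1}{2}}$. Since $S^{\frac{1}{2}}$ is self-adjoint one also has $F^*H^{\frac{1}{2}}=S^{\frac{1}{2}}$, and multiplying yields $S=H^{\frac{1}{2}}FF^*H^{\frac{1}{2}}$. Both range inclusions in (ii) then read off immediately upon applying $(H^{\frac{1}{2}})^\dagger$ on the left and then taking adjoints, with $(H^{\frac{1}{2}})^\dagger H^{\frac{1}{2}}=I$ collapsing everything to $\mathscr R(H^{\frac{1}{2}}FF^*)\subseteq \mathscr R(H^{\frac{1}{2}})$.

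The real obstacle lies in (iv) $\Rightarrow$ (iii). In the Hilbert space setting one would factor $S=(H^{\frac{1}{2}}X^{\frac{1}{2}})(H^{\frac{1}{2}}X^{\frac{1}{2}})^*$ for the positive solution $X$ and extract $\mathscr R(S^{\frac{1}{2}})\subseteq \mathscr R(H^{\frac{1}{2}}X^{\frac{1}{2}})\subseteq \mathscr R(H^{\frac{1}{2}})$ from polar decomposition, but polar decomposition is not generally available in Hilbert $C^*$-modules. I will therefore invoke the paper's strengthened Douglas lemma (the result advertised in the abstract) applied with $A=H^{\frac{1}{2}}$ and $B=S^{\frac{1}{2}}$: the inequality $BB^*=S\leq \lambda H=\lambda AA^*$ and the range inclusion $\mathscr R(BB^*)=\mathscr R(S)\subseteq \mathscr R(H^{\frac{1}{2}})$ are both supplied by (iv), while the orthogonally complemented hypothesis placed on $\overline{\mathscr R(H^{\frac{1}{2}}KH^{\frac{1}{2}})}=\overline{\mathscr R(B)}$ is exactly the mild complementation condition that result requires, and its conclusion $\mathscr R(B)\subseteq \mathscr R(A)$ is precisely (iii).
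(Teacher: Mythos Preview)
Your argument is correct and rests on the same three tools the paper uses---Theorem~\ref{t3}, Theorem~\ref{thFang}, and Theorem~\ref{t6}---with the implication (iv)$\Rightarrow$(iii) handled identically. The organization differs slightly: the paper proves a hub-and-spoke pattern ((i)$\Leftrightarrow$(ii), (i)$\Leftrightarrow$(iii), (i)$\Rightarrow$(iv)$\Rightarrow$(iii)), invoking Lemma~\ref{l1} for (i)$\Rightarrow$(iii), whereas your cycle bypasses Lemma~\ref{l1} entirely by going (i)$\Rightarrow$(iv) directly via $S=H^{1/2}XH^{1/2}\le\|X\|H$ and then (iv)$\Rightarrow$(iii) via Theorem~\ref{t6}; your (iii)$\Rightarrow$(ii) via the factorization $S=H^{1/2}FF^*H^{1/2}$ is likewise a mild shortcut over the paper's route through (i).
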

\begin{proof}
(i)$\Longrightarrow$(ii). It is evident from Theorem \ref{t3}.\\
(ii)$\Longrightarrow$(i). As in the proof of Theorem \ref{t3},  there is $T\in \mathcal{L}(\mathscr E)$ such that $THT=K$ and \begin{equation}\label{eq4}
(H^{\frac{1}{2}}KH^{\frac{1}{2}})^{\frac{1}{2}}=H^{\frac{1}{2}}TH^{\frac{1}{2}}\,.
\end{equation}
Multiplying $(H^{\frac{1}{2}})^{\dagger}$ on the both sides of \eqref{eq4} and noting that $(H^{\frac{1}{2}})^{\dagger}H^{\frac{1}{2}}=I$,  we get
\begin{equation}\label{eq5}
(H^{\frac{1}{2}})^{\dagger}(H^{\frac{1}{2}}KH^{\frac{1}{2}})^{\frac{1}{2}}(H^{\frac{1}{2}})^{\dagger}=TH^{\frac{1}{2}}(H^{\frac{1}{2}})^{\dagger}\,.
\end{equation}
 Multiply $H^{\frac{1}{2}}$ on the both sides \eqref{eq5} to get
\[
H^{\frac{1}{2}}\Big[ (H^{\frac{1}{2}})^{\dagger}(H^{\frac{1}{2}}KH^{\frac{1}{2}})^{\frac{1}{2}}(H^{\frac{1}{2}})^{\dagger}\Big] H^{\frac{1}{2}}=H^{\frac{1}{2}}TH^{\frac{1}{2}}\,.
\]
Employing  the nonsingularity of $H$,  we arrive at $T=(H^{\frac{1}{2}})^{\dagger}(H^{\frac{1}{2}}KH^{\frac{1}{2}})^{\frac{1}{2}}(H^{\frac{1}{2}})^{\dagger}$. Now we show that $T$ is positive. It is enough to show that $\langle TH^{\frac{1}{2}}x, H^{\frac{1}{2}}x \rangle\geq 0$ for any $x\in \mathscr E$. Indeed,
\begin{align*}
\langle TH^{\frac{1}{2}}x,H^{\frac{1}{2}}x\rangle &=\langle (H^{\frac{1}{2}})^{\dagger}(H^{\frac{1}{2}}KH^{\frac{1}{2}})^{\frac{1}{2}}(H^{\frac{1}{2}})^{\dagger}H^{\frac{1}{2}}x,H^{\frac{1}{2}}x\rangle\\
&=\langle H^{\frac{1}{2}}(H^{\frac{1}{2}})^{\dagger}(H^{\frac{1}{2}}KH^{\frac{1}{2}})^{\frac{1}{2}}x,x\rangle\\
&=\langle (H^{\frac{1}{2}}KH^{\frac{1}{2}})^{\frac{1}{2}}x,x\rangle\geq 0.
\end{align*}
(i)$\Longrightarrow$(iii). Assume that $THT=K$ for some positive operator $T\in \mathcal{L}(\mathscr E)$. Setting $S=H^{\frac{1}{2}}T^{\frac{1}{2}}$, we get
\begin{equation}\label{eq3}
(H^{\frac{1}{2}}KH^{\frac{1}{2}})^{\frac{1}{2}}=(H^{\frac{1}{2}}THTH^{\frac{1}{2}})^{\frac{1}{2}}=H^{\frac{1}{2}}TH^{\frac{1}{2}}=SS^*\,.
\end{equation}
From \eqref{eqclosed},  we see that
\[
\overline{\mathscr R(H^{\frac{1}{2}}KH^{\frac{1}{2}})}=\overline{\mathscr R(H^{\frac{1}{2}}KH^{\frac{1}{2}})^{\frac{1}{4}}}\,.
\]
Hence $\overline{\mathscr R(H^{\frac{1}{2}}KH^{\frac{1}{2}})^{\frac{1}{4}}}$ is orthogonally complemented. Thus Lemma \ref{l1} and \eqref{eq3} yield that
 \[
 \mathscr R(H^{\frac{1}{2}}KH^{\frac{1}{2}})^{\frac{1}{4}}\subseteq \mathscr R(S)=\mathscr R(H^{\frac{1}{2}}T^{\frac{1}{2}})\subseteq \mathscr R(H^{\frac{1}{2}})\,.
 \]
(iii)$\Longrightarrow$(i). Since $\overline{\mathscr R(H^{\frac{1}{2}})}=\overline{\mathscr R(H)}=\mathscr E$ and $\mathscr R(H^{\frac{1}{2}}KH^{\frac{1}{2}})^{\frac{1}{4}}\subseteq \mathscr R( H^{\frac{1}{2}})$, Theorem \ref{thFang} ensures  the existence of an operator $S\in \mathcal{L}(\mathscr E)$ such that
$H^{\frac{1}{2}}S=(H^{\frac{1}{2}}KH^{\frac{1}{2}})^{\frac{1}{4}}$. Set $T=SS^*$.  Then
\begin{align*}
H^{\frac{1}{2}}THTH^{\frac{1}{2}}&=H^{\frac{1}{2}}SS^*HSS^*H^{\frac{1}{2}}=H^{\frac{1}{2}}KH^{\frac{1}{2}}\,.
\end{align*}
Then nonsingularity of $H$ entails that $THT=K$.\\
(i)$\Longrightarrow$(iv). The validity of (ii) ensures that $\mathscr R(H^{\frac{1}{2}}KH^{\frac{1}{2}})^{\frac{1}{2}}\subseteq \mathscr R(H^{\frac{1}{2}})$. It follows from (iii) that $\mathscr R(H^{\frac{1}{2}}KH^{\frac{1}{2}})^{\frac{1}{4}}\subseteq \mathscr R(H^{\frac{1}{2}})$. Since $\mathscr R(H^{\frac{1}{2}})$ is an orthogonally complemented submodule of $\mathscr E$, by Theorem \ref{thFang}, we get $(H^{\frac{1}{2}}KH^{\frac{1}{2}})^{\frac{1}{2}}\leq \lambda H$ for some $\lambda>0$.  \\
(iv)$\Longrightarrow$(iii). If we set $A=H^{\frac{1}{2}}$ and $B=(H^{\frac{1}{2}}KH^{\frac{1}{2}})^{\frac{1}{4}}$, then we infer from Theorem \ref{t6} that (ii) holds.
\end{proof}

Next, we give some conditions under which $BB^*\leq AA^*$ is equivalent to
$\mathscr R(B)\subseteq \mathscr R(A) $. To achieve it, we need the following lemma, which is a modification of \cite[Theorem 2.4]{MR}. We give its proof for the sake of reader's convenience.

\begin{lemma}\label{l1}
Let $A,C\in \mathcal{L}(\mathscr E)$. Let $\overline{\mathscr R(A^*)}$ be an orthogonally complemented submodule of $\mathscr E$. If $AA^*=\lambda CC^*$ for some $\lambda\in \mathbb{R}$, then $\mathscr R(A)\subseteq \mathscr R(C)$.
\end{lemma}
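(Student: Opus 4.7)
The plan is to show that each element $Ay$ of $\mathscr R(A)$ has the form $\lambda C w$ for some $w \in \mathscr E$. The idea is to build $w$ as the limit of $C^* z_n$, where $\{z_n\}$ is any sequence with $A^* z_n$ converging to the projection $Py$ of $y$ onto $\overline{\mathscr R(A^*)}$; the transfer from $A^*$ to $C^*$ will be made possible by a pointwise norm identity derived from $AA^* = \lambda CC^*$.

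I would first dispose of the trivial cases $\lambda \le 0$: since $AA^* \ge 0$ and $\lambda CC^* \le 0$, the equation $AA^* = \lambda CC^*$ forces $AA^* = 0$, and then $\langle A^* y, A^* y\rangle = \langle AA^* y, y\rangle = 0$ for every $y$ yields $A = 0$, so the inclusion is vacuous. Assuming $\lambda > 0$, the key identity will be
\[
\|C^* w\|^2 = \|\langle CC^* w, w\rangle\| = \tfrac{1}{\lambda}\|\langle AA^* w, w\rangle\| = \tfrac{1}{\lambda}\|A^* w\|^2,\qquad w \in \mathscr E,
\]
which follows at once from $AA^* = \lambda CC^*$ together with the $C^*$-module identity $\langle Tz, Tz\rangle = \langle T^* T z, z\rangle$.

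Now I would invoke the hypothesis that $\overline{\mathscr R(A^*)}$ is orthogonally complemented, combined with the general identity $\overline{\mathscr R(A^*)}^\perp = \ker A$, to get $\mathscr E = \overline{\mathscr R(A^*)} \oplus \ker A$; let $P$ be the corresponding projection. For fixed $y \in \mathscr E$ the component $(I - P) y$ lies in $\ker A$, so $Ay = APy$, and since $Py \in \overline{\mathscr R(A^*)}$ I may choose $\{z_n\} \subseteq \mathscr E$ with $A^* z_n \to Py$. Continuity of $A$ then gives $AA^* z_n \to Ay$, and the hypothesis rewrites this as $\lambda CC^* z_n \to Ay$. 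The displayed identity, applied to $z_n - z_m$, shows that $\{C^* z_n\}$ is Cauchy in $\mathscr E$ and hence converges to some $w \in \mathscr E$, whereupon continuity of $C$ forces $CC^* z_n = C(C^* z_n) \to Cw$. Comparing the two limits yields $Ay = \lambda C w \in \mathscr R(C)$.

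The single place where the orthogonal complementability hypothesis genuinely enters is the decomposition $\mathscr E = \overline{\mathscr R(A^*)} \oplus \ker A$, which is precisely what lets me replace the arbitrary $y$ by the approximable $Py$; everything else is a routine manipulation of Cauchy sequences in the complete $C^*$-module $\mathscr E$. Notably, no assumption at all is imposed on $\overline{\mathscr R(C^*)}$, and the argument does not need Theorem \ref{thFang} or any form of Douglas's lemma.
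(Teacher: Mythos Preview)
Your proof is correct and follows essentially the same strategy as the paper's: both use the norm identity $\|C^*x\|=\lambda^{-1/2}\|A^*x\|$ to transfer Cauchy sequences from $\mathscr R(A^*)$ to $\mathscr R(C^*)$, and both invoke the decomposition $\mathscr E=\overline{\mathscr R(A^*)}\oplus\mathscr N(A)$ to reduce the problem to elements in $\overline{\mathscr R(A^*)}$. The only difference is presentational---the paper packages the map $A^*x\mapsto C^*x$ as a bounded operator $\tilde D$ and concludes $C\tilde D=\lambda^{-1}A$, whereas you carry out the same limit construction element by element; your $w$ is exactly the paper's $\tilde D y$.
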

\begin{proof}
Define $ D:\mathscr R(A^*)\to \mathscr R(C^*)$ by
\[
D(A^*x)=C^*x\qquad (x\in \mathscr E)\,.
\]
Since $AA^*=\lambda CC^*$, we have 
\[
\|A^*x\|=\lambda^{\frac{1}{2}}\|C^*x\|\qquad(x\in \mathscr E ).
\]
Hence $D$ is a well-defined bounded linear operator. Let $D_0$ be the bounded extension of $D$  to $\overline{\mathscr R(A^*)}$. Then
\[
\tilde{D}(y)=\begin{cases}
D_0(y),& y \in \overline{\mathscr R(A^*)},\\
0,& y\in \mathscr  N(A),
\end{cases}
\]
is well-defined and $\tilde{D}A^*=C^*$. In addition,  $C\tilde{D}A^*=CC^*=\lambda AA^*$.

Since $\overline{\mathscr R(A^*)}$ is orthogonally complemented in $\mathscr E$, we get $C\tilde{D}=A$. Hence $\mathscr R(A)\subseteq \mathscr R(C)$.
\end{proof}
\begin{theorem}\label{t6}
Let $\mathscr E$ be a Hilbert $\mathscr{A}$-module. Let $A,B\in \mathcal{L}(\mathscr E)$ and let $\overline{\mathscr R(A^*)}$ and $\overline{\mathscr R(B^*)}$ be  orthogonally complemented submodules of $\mathscr E$. Then the following assertions are equivalent:
\begin{itemize}
\item[(i)] The operator equation $AX=B$ has a solution.
\item[(ii)] $\mathscr R(B)\subseteq \mathscr R(A)$.
\item[(iii)] $ \mathscr R(A)\supseteq \mathscr R(BB^*)$ and $BB^*\leq \lambda AA^*$  for some scalar $\lambda>0$.
\end{itemize}
\end{theorem}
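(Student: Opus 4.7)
The plan is to prove the cycle (i)~$\Rightarrow$~(ii)~$\Rightarrow$~(iii)~$\Rightarrow$~(i). The first two are short: (i)~$\Rightarrow$~(ii) is immediate, and (ii)~$\Rightarrow$~(iii) follows from Theorem~\ref{thFang}---which converts (ii) into a solution $X$ with $AX=B$---together with $BB^* = AXX^*A^* \le \|X\|^2 AA^*$ and $\mathscr R(BB^*) \subseteq \mathscr R(B) \subseteq \mathscr R(A)$.

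The main work is in (iii)~$\Rightarrow$~(i), and the strategy is to manufacture the solution $X$ as $\tilde D^*$ for a suitable $\tilde D$. From $\mathscr R(BB^*) \subseteq \mathscr R(A)$ and Theorem~\ref{thFang}, take the reduced solution $W \in \mathcal{L}(\mathscr E)$ of $AW = BB^*$, so that $\mathscr R(W) \subseteq \overline{\mathscr R(A^*)}$. Separately, from $BB^* \le \lambda AA^*$ and the construction in the proof of Lemma~\ref{l1}, the assignment $A^*x \mapsto B^*x$ is well-defined and bounded on $\mathscr R(A^*)$; extending by continuity to $\overline{\mathscr R(A^*)}$ and by zero on $\mathscr N(A)$ produces a bounded $\mathscr A$-linear $\tilde D : \mathscr E \to \mathscr E$ with $\tilde D A^* = B^*$ and $\mathscr R(\tilde D) \subseteq \overline{\mathscr R(B^*)}$. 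A priori, this $\tilde D$ need not be adjointable.

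The crux is to show $\tilde D$ is adjointable, since then $A \tilde D^* = (\tilde D A^*)^* = B$ and $X := \tilde D^*$ solves $AX = B$. I would define a candidate for $\tilde D^*$ on the dense submodule $\mathscr R(B^*) \subseteq \overline{\mathscr R(B^*)}$ by $\tilde D^*(B^*v) := Wv$; this is well-defined because $B^*v = B^*v'$ forces $AW(v-v') = BB^*(v-v') = 0$, and $A$ is injective on $\overline{\mathscr R(A^*)} \supseteq \mathscr R(W)$. A short computation using $AW = BB^*$ and $\tilde D A^* = B^*$ yields $\langle \tilde D x, B^*v\rangle = \langle x, Wv\rangle$ first for $x \in \mathscr R(A^*)$, and then for all $x \in \mathscr E$ by continuity on $\overline{\mathscr R(A^*)}$ and by both sides vanishing on $\mathscr N(A)$ (using $\tilde D|_{\mathscr N(A)} = 0$ and $Wv \in \overline{\mathscr R(A^*)}$). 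The Cauchy--Schwarz inequality for the $\mathscr A$-valued inner product then automatically forces $\|\tilde D^*y\| \le \|\tilde D\|\,\|y\|$ on $\mathscr R(B^*)$, so $\tilde D^*$ extends continuously to $\overline{\mathscr R(B^*)}$ and further by zero on $\mathscr N(B)$ (consistent because $\mathscr R(\tilde D) \subseteq \overline{\mathscr R(B^*)} \perp \mathscr N(B)$), producing $\tilde D^* \in \mathcal{L}(\mathscr E)$.

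The principal obstacle is precisely this adjointability of $\tilde D$: in the Hilbert space setting every bounded linear map is automatically adjointable, which is why the classical Douglas lemma works there, but in Hilbert $C^*$-modules it can fail. The added hypothesis $\mathscr R(BB^*) \subseteq \mathscr R(A)$ is exactly what supplies a candidate for $\tilde D^*$ on the dense submodule $\mathscr R(B^*)$ via the adjointable operator $W$. Both orthogonal complementability assumptions are essential: $\overline{\mathscr R(A^*)}$ is used to invoke Theorem~\ref{thFang} and to have $A$ injective on $\overline{\mathscr R(A^*)}$, and $\overline{\mathscr R(B^*)}$ is used to decompose $\mathscr E = \overline{\mathscr R(B^*)} \oplus \mathscr N(B)$ for the final piecewise definition of $\tilde D^*$.
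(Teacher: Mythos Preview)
Your proposal is correct and follows essentially the same route as the paper: for the main implication (iii)$\Rightarrow$(i), both arguments take the reduced solution $W$ (the paper calls it $S$) of $AW=BB^*$, construct the bounded map $\tilde D$ via $A^*x\mapsto B^*x$ extended by zero on $\mathscr N(A)$, and then establish that $\tilde D$ is adjointable by producing its adjoint out of $W$. Your handling of the adjointability step---defining the candidate adjoint on $\mathscr R(B^*)$ by $B^*v\mapsto Wv$, checking well-definedness via the injectivity of $A$ on $\overline{\mathscr R(A^*)}$, and invoking Cauchy--Schwarz for boundedness before extending by zero on $\mathscr N(B)$---is somewhat more explicit than the paper's compressed computation leading to $\tilde D^* = S P_{B^*}$, but the underlying mechanism is the same.
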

\begin{proof}
The equivalency of (i) and (ii) follows from Theorem \ref{thFang}.\\
(ii)$\Longrightarrow$ (iii). Since  $\mathscr R(B)\subseteq \mathscr R(A)$ and $\overline{\mathscr R(A^*)}$ is an orthogonally complemented submodule of $\mathscr E$, we conclude from Theorem \ref{thFang} that  there exists $D\in \mathcal{L}(\mathscr E)$ such that $AD=B$. This shows that $BB^*\leq AA^*$. Clearly, $\mathscr R(BB^*)\subseteq \mathscr R(B) \subseteq  \mathscr R(A)$.\\
(iii)$\Longrightarrow$(ii). The hypothesis  $\mathscr R(BB^*)\subseteq \mathscr R(A)$ and Theorem \ref{thFang} show that there is a  reduced solution $S\in \mathcal{L}(\mathscr E)$ such that $AS=BB^*$. Also define $D:\mathscr R(A^*)\to \mathscr E$ by
\[
D(A^*x)=B^*x.
\]
Since $BB^*\leq \lambda AA^*$, we have $\|B^*x\|\leq \lambda^{\frac{1}{2}}\|A^*x\|$. Therefore,  $D$ is a well-defined bounded linear operator.  Let $\tilde{D}$ be the extension of $D$ defined on the whole $\mathscr E$ such that
$\tilde{D}|_{\mathscr N(A)}=0$ and $\tilde{D}A^*=B^*$. We show that $\tilde{D}\in \mathcal{L}(\mathscr E)$. We claim that $\tilde{D}$ is adjointable and so $A\tilde{D}^*=B$, which completes the proof. Indeed, for any $x,y\in \mathscr E$, we have
\begin{align*}
\langle \tilde{D}A^*x, B^*y\rangle =\langle B^*x, B^*y\rangle
=\langle x, BB^*y\rangle
= \langle x, ASy\rangle
=\langle A^*x,Sy\rangle.
\end{align*}
Hence
\begin{equation}\label{eqort}
\langle \tilde{D}x,y\rangle=\langle x,Sy\rangle\,\qquad (x\in \overline{\mathscr R(A^*)},y\in \overline{\mathscr R(B^*)})\,.
\end{equation}

Now suppose that $x,y\in \mathscr E$. Then  $x=x_1+x_2$ and $y=y_1+y_2$, where $x_1\in \overline{\mathscr R(A^*)},x_2\in \mathscr N(A), y_1\in \overline{\mathscr R(B^*)}$, and $y_2\in \mathscr N(B)$. In this case, we have
\begin{align*}
\langle \tilde{D}x,y\rangle &=\langle \tilde{D}(x_1+x_2),y_1+y_2\rangle\\
&=\langle \tilde{D}x_1,y_1\rangle\qquad({\rm since} ~\overline{\mathscr R(\tilde{D})}\subset \overline{\mathscr R(B^*)} ~{\rm and}~y_2\in \mathscr N(B))\\
&=\langle x_1,Sy_1\rangle\qquad ({\rm by~\eqref{eqort}})\\
&=\langle S^*x,y_1\rangle\qquad ({\rm since ~S ~is ~the~reduced ~solution ~AX=BB^*})\\
&=\langle S^*x,P_{B^*}y\rangle\\
&=\langle x,SP_{B^*}y\rangle\,.
\end{align*}
This shows that $\tilde{D}^*=SP_{B^*}$.
\end{proof}
In the following example, we show that there are operators $A,C\in \mathcal{L}(\mathscr E)$ such that $\overline{\mathscr R(A^*)}$ is orthogonally complemented, $\overline{\mathscr R(C^*)}$  is not orthogonally complemented, and $CC^*\leq \lambda AA^*$ for some positive scalar $\lambda$ but $\mathscr R(C)\not \subseteq \mathscr R(A)$; see \cite[Example 2.2]{MR}.
\begin{example}\label{ex1}
Let $\mathscr  A=C[0, 1]$ and let $\mathscr M=\{f\in \mathscr A, f(0) =0\}$ be its maximal ideal regarded as a Hilbert $\mathscr A$-module. Consider the operators  $A \in \mathcal L(\mathscr M, \mathscr A)$ and $C\in \mathcal L(\mathscr A)$ defined by
\[
(Af)(\lambda) = \lambda f(\lambda)\qquad (f\in \mathscr  M)
\]
and
\[
(Cf)(\lambda) = \lambda f(\lambda)\qquad (f\in\mathscr  A ).
\]
Set $\mathscr E=\mathscr A\oplus \mathscr M$. Let  $\tilde{A},\tilde{C}\in \mathcal L(\mathscr E)$ be defined by the block matrices of operators as follows:
\[
\tilde{A}=\begin{pmatrix}
0&A\\
0&0
\end{pmatrix}\\,\qquad\tilde{C}=\begin{pmatrix}
C&0\\
0&0
\end{pmatrix}\,.
\]
It is easy to see that $C^*=C$ and $(A^*f)(\lambda)=\lambda f(\lambda)$ for all $f\in \mathscr A$. Moreover,  $\tilde{C}\tilde{C}^*= \tilde{A}\tilde{A}^*$ and $\mathscr R(\tilde{C})\not \subseteq \mathscr R(\tilde{A})$, since for $f_0(\lambda) =\lambda $, we have 
\[\begin{pmatrix}
f_0\\0
\end{pmatrix} \in \mathscr R(\tilde{C}) \backslash \mathscr R(\tilde{A})\,.
\]
\end{example}
In the following example, we show that there are operators $A,B\in \mathcal{L}(\mathscr E)$ such that $\overline{\mathscr R(A^*)}$ and $\overline{\mathscr R(B^*)}$  are  orthogonally complemented and $\mathscr R (BB^*)\subseteq \mathscr R(A)$ but $\mathscr R(B)\not \subseteq \mathscr R(A)$.
\begin{example}
With the same notation as in Example \ref{ex1}, consider the operators  $A,D \in \mathcal L(\mathscr M, \mathscr A)$ defined by
\[
(Af)(\lambda) = \lambda f(\lambda)\,,\quad (Df)(\lambda)=\lambda^{\frac{2}{3}} f(\lambda)\qquad (f\in\mathscr  M ).
\]
One can verify that
\[
(A^*f)(\lambda) = \lambda f(\lambda)\,,\quad (D^*f)(\lambda)=\lambda^{\frac{2}{3}} f(\lambda)\qquad (f\in\mathscr  A).
\]
Set $\mathscr E=\mathscr A\oplus \mathscr M$. Let  $\tilde{A},\tilde{D}\in \mathcal L(\mathscr E)$ be defined  as follows:
\[
\tilde{A}=\begin{pmatrix}
0&A\\
0&0
\end{pmatrix}\\,\qquad\tilde{D}=\begin{pmatrix}
0&D\\
0&0
\end{pmatrix}\,.
\]
 Note that 
 for every $f\in \mathscr A$, we have 
\[
(DD^*f)(\lambda)=\lambda^{\frac{4}{3}}f(\lambda)=\lambda(\lambda^{\frac{1}{3}}f(\lambda))=(Ag)(\lambda)\,,
\]
where $g(\lambda)=\lambda^{\frac{1}{3}}f(\lambda)$ for all $\lambda\in[0,1]$. This  shows that $\mathscr R(\tilde{D}\tilde{D}^*)\subseteq \mathscr R(\tilde{A})$.
 In fact, for every $\begin{pmatrix}
f\\f'
\end{pmatrix}\in \mathscr E$, we have 
\begin{align*}
\tilde{D}\tilde{D}^*\begin{pmatrix}
f\\f'
\end{pmatrix}=\begin{pmatrix}
DD^*f\\0
\end{pmatrix}=\begin{pmatrix}
AA^*g\\0
\end{pmatrix}=\tilde{A}\begin{pmatrix}g\\0
\end{pmatrix}\,.
\end{align*}
Moreover,  $\mathscr R(\tilde{D})\not \subseteq \mathscr R(\tilde{A})$. Indeed, define $f\in \mathscr M$ by
\[
f(\lambda)=\lambda^{\frac{1}{3}}\,\qquad(\lambda\in [0,1])\,.
\]
Then 
\[
\tilde{D}(f)(\lambda)=1\qquad(\lambda\in [0,1])\,.
\]
This shows that $\tilde{D}f\not\in \mathscr R(\tilde{A})$.
\end{example}
For the next result, we need the following lemma.
\begin{lemma}\label{l2}
Let $(\mathscr H,\pi)$ be a representation of $\mathcal{L}(\mathscr E)$. Let $P$ be a projection on $\overline{\mathscr R(T)}$. Then $\pi(P)$ is the projection such that  $\overline{\mathscr{R}(\pi(T))}\subseteq \mathscr R(\pi(P))$.
\end{lemma}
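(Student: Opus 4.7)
The plan has three short steps. First, I would verify that $\pi(P)$ is a projection in $\mathcal{B}(\mathscr H)$. Since $\pi$ is a $\ast$-homomorphism between $C^*$-algebras and $P^2=P=P^*$, the two computations $\pi(P)^2=\pi(P^2)=\pi(P)$ and $\pi(P)^*=\pi(P^*)=\pi(P)$ handle this immediately.

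Second, I would establish the algebraic identity $PT=T$ in $\mathcal{L}(\mathscr E)$. This is the crux of the argument and is a direct consequence of $P$ being the orthogonal projection onto $\overline{\mathscr R(T)}$: for every $x\in \mathscr E$, the element $Tx$ lies in $\mathscr R(T)\subseteq \overline{\mathscr R(T)}$, where $P$ acts as the identity, so $PTx=Tx$.

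Third, I would apply the homomorphism $\pi$ to the identity $PT=T$ to obtain $\pi(P)\pi(T)=\pi(PT)=\pi(T)$. This shows $\pi(T)\xi=\pi(P)\pi(T)\xi\in \mathscr R(\pi(P))$ for every $\xi\in\mathscr H$, hence $\mathscr R(\pi(T))\subseteq \mathscr R(\pi(P))$. Since $\pi(P)$ is a projection on the Hilbert space $\mathscr H$, its range is closed, so passing to the closure yields $\overline{\mathscr R(\pi(T))}\subseteq \mathscr R(\pi(P))$, as required.

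I do not anticipate any genuine obstacle; the argument is essentially the tautology that $\ast$-homomorphisms send projections to projections and preserve the range inclusion $\mathscr R(T)\subseteq \mathscr R(P)$. The only point where the Hilbert $C^*$-module setting matters is that $\overline{\mathscr R(T)}$ must be orthogonally complemented in $\mathscr E$ for the projection $P$ to exist in $\mathcal{L}(\mathscr E)$; but this is already implicit in the hypothesis that speaks of ``the projection on $\overline{\mathscr R(T)}$.''
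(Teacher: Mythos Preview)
Your proposal is correct and follows essentially the same route as the paper's own proof: verify $\pi(P)$ is a projection via $\pi(P)^2=\pi(P)=\pi(P)^*$, use $PT=T$ to get $\pi(P)\pi(T)=\pi(T)$, and conclude $\overline{\mathscr R(\pi(T))}\subseteq \mathscr R(\pi(P))$. Your write-up is in fact slightly more careful, since you explicitly note that $\mathscr R(\pi(P))$ is closed before passing to the closure.
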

\begin{proof}
Since $P$ is a projection, we have 
\[
\left(\pi(P)\right)^2=\pi(p)\pi(P)=\pi(P^2)=\pi(P)\,,\qquad(\pi(P))^*=\pi(P^*)=\pi(P)\,,
\]
so $\pi(P)$ is a projection. Also $PT=T$ gives that 
\[
\pi(P)\pi(T)=\pi(PT)=\pi(T)\,.
\]
Hence $\overline{\mathscr R(T)}\subseteq \mathscr R(\pi(P))$.
\end{proof}
\begin{theorem}\label{thvon}
Let $\mathcal{L}(\mathscr E)$ be a $W^*$-algebra. Let $A,B\in \mathcal{L}(\mathscr E)$ and let $\overline{\mathscr R(A^*)}=\mathscr E$. Then $BB^*\leq\lambda AA^*$ for some $\lambda>0$ if and only if $\mathscr R (B)\subseteq \mathscr R(A)$.
\end{theorem}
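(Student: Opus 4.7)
The forward implication follows immediately from Theorem \ref{thFang}: since $\overline{\mathscr R(A^*)}=\mathscr E$ is trivially orthogonally complemented, $\mathscr R(B)\subseteq\mathscr R(A)$ provides an $X\in\mathcal{L}(\mathscr E)$ with $AX=B$, whence $BB^*=A(XX^*)A^*\leq\|X\|^2 AA^*$. The substantive content is the reverse implication.

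For that, the plan is to exploit the $W^*$-algebra structure through a faithful normal $*$-representation $\pi\colon\mathcal{L}(\mathscr E)\to\mathcal{B}(\mathscr H)$, whose image is a von Neumann subalgebra of $\mathcal{B}(\mathscr H)$ and so, by the double commutant theorem, coincides with its own bicommutant $\pi(\mathcal{L}(\mathscr E))''$. Applying $\pi$ to $BB^*\leq\lambda AA^*$ yields $\pi(B)\pi(B)^*\leq\lambda\pi(A)\pi(A)^*$ in $\mathcal{B}(\mathscr H)$, and the classical Douglas lemma (Theorem \ref{Douglas}) then produces a ``reduced'' solution $C\in\mathcal{B}(\mathscr H)$ satisfying $\pi(A)C=\pi(B)$ together with the range condition $\mathscr R(C)\subseteq\overline{\mathscr R(\pi(A)^*)}$. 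Once $C$ is shown to lie in $\pi(\mathcal{L}(\mathscr E))$, faithfulness supplies a unique $C_0\in\mathcal{L}(\mathscr E)$ with $\pi(C_0)=C$, and the equality $\pi(AC_0)=\pi(A)C=\pi(B)$ yields $AC_0=B$, proving $\mathscr R(B)\subseteq\mathscr R(A)$.

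The main obstacle is therefore the transfer step $C\in\pi(\mathcal{L}(\mathscr E))$. By the bicommutant equality it suffices to check that $C$ commutes with every $T\in\pi(\mathcal{L}(\mathscr E))'$. Such a $T$ commutes with both $\pi(A)$ and $\pi(A)^*=\pi(A^*)$, so it preserves both $\mathscr N(\pi(A))$ and its Hilbert-space orthogonal complement $\overline{\mathscr R(\pi(A)^*)}$. A direct computation gives
\[
\pi(A)(TC-CT)=T\pi(A)C-\pi(A)CT=T\pi(B)-\pi(B)T=0,
\]
so $\mathscr R(TC-CT)\subseteq\mathscr N(\pi(A))$. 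On the other hand, $\mathscr R(CT)\subseteq\mathscr R(C)\subseteq\overline{\mathscr R(\pi(A)^*)}$ and $\mathscr R(TC)\subseteq T\bigl(\overline{\mathscr R(\pi(A)^*)}\bigr)\subseteq\overline{\mathscr R(\pi(A)^*)}$. The Hilbert-space orthogonality $\mathscr N(\pi(A))\perp\overline{\mathscr R(\pi(A)^*)}$ then forces $TC-CT=0$, which is the key step; the rest of the argument is the clean-up described above.
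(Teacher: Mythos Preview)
Your proof is correct and follows the same overall strategy as the paper: pass to a faithful representation $\pi$ of the $W^*$-algebra on a Hilbert space, apply the classical Douglas lemma there, and then use the bicommutant identity $\pi(\mathcal{L}(\mathscr E))''=\pi(\mathcal{L}(\mathscr E))$ together with faithfulness to pull the solution back to $\mathcal{L}(\mathscr E)$.

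The one tactical difference is in how the commutation $TC=CT$ (for $T\in\pi(\mathcal{L}(\mathscr E))'$) is established. The paper first invokes Lemma~\ref{l2}, applied to the projection $A^\dagger A$ onto $\overline{\mathscr R(A^*)}=\mathscr E$, to transfer the density hypothesis through $\pi$ and conclude $\overline{\mathscr R(\pi(A)^*)}=\mathscr H$, i.e.\ $\mathscr N(\pi(A))=\{0\}$; the identity $\pi(A)(TC-CT)=0$ then gives $TC=CT$ by injectivity, and any Douglas solution will do. You instead take the \emph{reduced} Douglas solution with $\mathscr R(C)\subseteq\overline{\mathscr R(\pi(A)^*)}$, note that $T$ preserves this subspace, and use the Hilbert-space orthogonality $\mathscr N(\pi(A))\perp\overline{\mathscr R(\pi(A)^*)}$ to force $TC-CT=0$. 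Your argument therefore bypasses Lemma~\ref{l2} (and, incidentally, does not directly use the hypothesis $\overline{\mathscr R(A^*)}=\mathscr E$ at this step), at the cost of tracking the range condition on $C$; the paper's route is a line shorter once Lemma~\ref{l2} is in hand.
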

\begin{proof}
If $\mathscr R(B)\subseteq \mathscr R(A)$, then  Theorem \ref{thFang} entails that $BB^*\leq \lambda AA^*$ for some $\lambda>0$. For the reverse, let $(\mathscr H,\pi)$ be a faithful representation of $\mathcal{L}(\mathscr E)$. Since  $\overline{\mathscr R(A^*)}=\mathscr E$, we have $\mathscr N(A)=\{0\}$. From Lemma \ref{l2} applied to the projection $A^\dagger A$ onto $\overline{\mathscr R(A^*)}$, we deduce that $\overline{\mathscr R(\pi(A)^*)}=\mathscr H $, so $\mathscr N(\pi(A))=\{0\}$.  Now  we have
\[
\pi(B)\pi(B)^*=\pi(BB^*)\leq \pi(AA^*)\leq \pi(A)\pi(A)^*\,.
\]
Making use of \cite[Theorem 1]{Dou}, we obtain an operator  $X\in \mathcal{B}(\mathscr H)$ such  that
\begin{equation}\label{eq6}
\pi(A)X=\pi(B)\,.
\end{equation}   Given every $Z$ in the commutant  $\pi(\mathcal{L}(\mathscr E))'$ of $\pi(\mathcal{L}(\mathscr E))$, we have
\begin{align*}
\pi(A)ZX=Z\pi(A)X
=Z\pi(B)
=\pi(B)Z
=\pi(A)XZ.
\end{align*}
Since $\mathscr N(\pi(A))=\{0\}$, we have $XZ=ZX$. This gives that $X\in \pi(\mathcal{L}(\mathscr E))''=\pi(\mathcal{L}(\mathscr E))$. Hence there is $S\in \mathcal{L}(\mathscr E)$ such that $\pi(S)=X$. From \eqref{eq6}, we get
\begin{align*}
\pi(AS)=\pi(A)\pi(S)=\pi(A)X=\pi(B)\,.
\end{align*}
Since $(\mathscr E,\pi)$ is faithful, we conclude that $AS=B$ as desired.
\end{proof}
\begin{corollary}
Let $\mathscr A$ be a $W^*$-algebra. Let $\mathscr E$ be a self-dual Hilbert $C^*$-module.  Let $A,B\in \mathcal{L}(\mathscr E)$ and let $\overline{\mathscr R(A^*)}=\mathscr E$. Then the following assertions are equivalent:
\begin{itemize}
\item[(i)] $\mathscr R(B)\subseteq \mathscr R(A)$.
\item[(ii)] $BB^*\leq \lambda AA^*$ for some positive scalar $\lambda$.
\end{itemize}
\end{corollary}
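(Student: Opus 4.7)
The plan is to deduce this corollary directly from Theorem \ref{thvon}. The only thing one has to check is that the hypothesis of Theorem \ref{thvon}, namely that $\mathcal{L}(\mathscr E)$ is a $W^*$-algebra, is automatically satisfied in the present setting.

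First I would invoke Paschke's classical result: if $\mathscr A$ is a $W^*$-algebra and $\mathscr E$ is a self-dual Hilbert $\mathscr A$-module, then $\mathcal{L}(\mathscr E)$ is a $W^*$-algebra. This is the key structural fact that closes the gap between the corollary's hypothesis (on $\mathscr A$ and $\mathscr E$) and Theorem \ref{thvon}'s hypothesis (on $\mathcal{L}(\mathscr E)$). I would cite this as a standard fact from the theory of Hilbert $C^*$-modules.

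Once that is in hand, the implication (i)$\Rightarrow$(ii) is immediate from Theorem \ref{thFang}, since $\overline{\mathscr R(A^*)}=\mathscr E$ is trivially orthogonally complemented, so $\mathscr R(B)\subseteq \mathscr R(A)$ forces $AX=B$ to have a solution $D$ with $BB^*=ADD^*A^*\le \|D\|^2 AA^*$. For the reverse implication (ii)$\Rightarrow$(i), I would apply Theorem \ref{thvon} verbatim: its hypotheses are that $\mathcal{L}(\mathscr E)$ is a $W^*$-algebra (supplied by Paschke), that $A,B\in \mathcal{L}(\mathscr E)$ (given), and that $\overline{\mathscr R(A^*)}=\mathscr E$ (given).

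Since the work has essentially been done in Theorem \ref{thvon}, there is no real obstacle here — the proof is a two-line reduction. The only point worth making explicit in the write-up is the invocation of Paschke's theorem, so the reader understands why $\mathcal{L}(\mathscr E)$ inherits the $W^*$ structure from $\mathscr A$ under self-duality. I would therefore keep the proof extremely short: state the Paschke reduction, then refer to Theorem \ref{thvon} for both directions.
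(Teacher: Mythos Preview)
Your proposal is correct and matches the paper's own proof: the paper simply cites Paschke's result \cite[Proposition 3.10]{Pa} to conclude that $\mathcal{L}(\mathscr E)$ is a $W^*$-algebra, and then invokes Theorem \ref{thvon} for both directions. Your separate treatment of (i)$\Rightarrow$(ii) via Theorem \ref{thFang} is fine but redundant, since Theorem \ref{thvon} already covers that implication.
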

\begin{proof}
It follows from \cite[Proposition 3.10]{Pa}  that $\mathcal{L}(\mathscr E)$ is a $W^*$-algebra. Hence Theorem \ref{thvon} gives the result.
\end{proof}

Set
\[
\mathscr N_{\mathscr E,f}=\{x\in \mathscr E; f(\langle x,x\rangle)=0\}\,,\quad {\rm and}\quad \mathscr N_{\mathscr A,f}=\{a\in \mathscr A; f(a^*a)=0\}.
\]
It is easy to see that $\mathscr N_{\mathscr E,f}$ is a closed submodule of $\mathscr E$ and that $\mathscr N_{\mathscr A,f}$ is a closed ideal of $\mathscr A$.  The next result reads as follows.
\begin{theorem}\label{thl2}
Let $\mathscr E={l}_2(\mathscr A)$ and let  $A,B\in \mathcal{L}(\mathscr E)$. A necessary condition for
\begin{equation}\label{eqineq}
BB^*\leq \lambda AA^*
\end{equation}
  is  that
 for every pure state $f$ on $\mathscr A$,
\begin{equation}\label{eqs}
 \mathscr R(B)\subseteq \mathscr R(A)+\mathscr N_{\mathscr E,f}
\end{equation}
be valid.
\end{theorem}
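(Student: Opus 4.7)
The plan is to transfer the operator inequality to a Hilbert space via a GNS-type construction associated to $f$, apply the classical Douglas lemma there, and then descend back to $\mathscr E$. Given a pure state $f$ on $\mathscr A$, let $(\mathscr H_f,\pi_f,\Omega_f)$ denote the associated GNS triple, so $\pi_f$ is irreducible and $\Omega_f$ is a cyclic unit vector. Form the pre-Hilbert space $\mathscr E/\mathscr N_{\mathscr E,f}$ with inner product $\langle[x],[y]\rangle_f:=f(\langle x,y\rangle)$, and denote its completion by $\widetilde{\mathscr E}_f$. For any $T\in\mathcal L(\mathscr E)$, the estimate $\langle Tx,Tx\rangle\le\|T\|^2\langle x,x\rangle$ shows that $T$ preserves $\mathscr N_{\mathscr E,f}$ and induces a bounded operator $\widetilde T$ on $\widetilde{\mathscr E}_f$ satisfying $\widetilde T[x]=[Tx]$ and $(\widetilde T)^*=\widetilde{T^*}$. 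Applying the positive functional $f$ to $BB^*\le\lambda AA^*$ then yields $\widetilde B\widetilde B^*\le\lambda\widetilde A\widetilde A^*$, so Theorem \ref{Douglas} delivers $\mathscr R(\widetilde B)\subseteq\mathscr R(\widetilde A)$.

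The crux is to show that for $\mathscr E=l_2(\mathscr A)$ and pure $f$, the pre-Hilbert space $\mathscr E/\mathscr N_{\mathscr E,f}$ is already complete and isometrically isomorphic to $l_2(\mathscr H_f)$ via $[(a_i)_i]\mapsto([a_i]_f)_i$. That this map is isometric is immediate from $f\bigl(\sum_i a_i^*a_i\bigr)=\sum_i\|[a_i]_f\|^2$. Surjectivity is where pureness enters: given $(\xi_i)\in l_2(\mathscr H_f)$, I would invoke Kadison's transitivity theorem for the irreducible representation $\pi_f$ to lift each $\xi_i$ to an element $a_i\in\mathscr A$ with $\pi_f(a_i)\Omega_f=\xi_i$ and $\|a_i\|\le\|\xi_i\|+2^{-i}$. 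Then $\sum_i\|a_i\|^2<\infty$ and $\bigl\|\sum_{i=N}^M a_i^*a_i\bigr\|\le\sum_{i=N}^M\|a_i\|^2\to 0$, so $\sum_i a_i^*a_i$ converges in norm in $\mathscr A$, witnessing $(a_i)\in l_2(\mathscr A)=\mathscr E$ with $[(a_i)]$ mapping to $(\xi_i)$.

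Once this identification $\mathscr E/\mathscr N_{\mathscr E,f}=\widetilde{\mathscr E}_f$ is established, the conclusion follows immediately: for any $y=Bx\in\mathscr R(B)$, the element $[y]=\widetilde B[x]$ lies in $\mathscr R(\widetilde B)\subseteq\mathscr R(\widetilde A)=\widetilde A(\mathscr E/\mathscr N_{\mathscr E,f})$, so there exists $z\in\mathscr E$ with $[Az]=[y]$, i.e., $y-Az\in\mathscr N_{\mathscr E,f}$, giving $y\in\mathscr R(A)+\mathscr N_{\mathscr E,f}$ as desired. The main obstacle is the Kadison-based surjectivity argument; without the irreducibility supplied by pureness of $f$ one would only obtain membership in the closure $\overline{\mathscr R(A)+\mathscr N_{\mathscr E,f}}$ (inside $\widetilde{\mathscr E}_f$) rather than the exact inclusion, so the hypothesis that $f$ is pure is genuinely used, not a superficial strengthening.
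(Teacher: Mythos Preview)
Your argument is correct and follows essentially the same route as the paper's: form the quotient pre-Hilbert space $\mathscr E/\mathscr N_{\mathscr E,f}$, identify it isometrically with $l_2(\mathscr H_f)$ to see it is already complete, push the operator inequality down to this Hilbert space via the induced $*$-representation, apply the classical Douglas lemma there, and pull back. Your explicit use of Kadison's transitivity theorem to obtain norm-controlled lifts (and hence surjectivity of the identification map) is precisely the point the paper passes over when it simply declares $\psi$ to be ``an isomorphism,'' so your version is in fact more carefully justified on that step.
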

\begin{proof}
It is known from \cite[Theorem 5.2.4]{Mur}  that $\frac{\mathscr A}{\mathscr N_{\mathscr A,f}}$ is a Hilbert space endowed with the inner product defined by
\[
(a+\mathscr N_{\mathscr A,f},b+\mathscr N_{\mathscr A,f})_f=f(b^*a)\,.
\]
In addition, $\frac{\mathscr E}{\mathscr N_{\mathscr E,f}}$ is a pre-Hilbert space equipped with the inner product $\langle \cdot, \cdot\rangle_f$ defined by
\[
\langle x+\mathscr N_{\mathscr E,f},y+\mathscr N_{\mathscr E,f}\rangle_f=f(\langle x,y\rangle)\qquad (x,y\in \mathscr E).
\]
Next, we show that  $\frac{\mathscr E}{\mathscr N_{\mathscr E,f}}$ is complete. 

Define the $\mathscr A$-linear operator $\psi $ by
\[
\psi :\frac{\mathscr E}{\mathscr N_{\mathscr E,f}}\to l_2\left( \frac{\mathscr A}{\mathscr N_{\mathscr A,f}}\right)\,, \psi((a_i)+\mathscr N_{\mathscr E,f})=(a_i+\mathscr N_{\mathscr A,f}),
\]
If  $(a_i)\in \mathscr N_{\mathscr E,f}$, then $\sum_{i=1}^\infty f(a_i^*a_i)=0$ and  so $f(a_i^*a_i)=0$. Hence $a_i\in \mathscr N_{\mathscr A,f}$ for all $i\geq 1$.  Thus, $\psi $ is well-defined. Moreover, we have
\begin{align*}
\|\psi ((a_i)+\mathscr N_{\mathscr E,f})\|^2 &=\|(a_i+\mathscr N_{\mathscr A,f})\|^2 =\langle (a_i+\mathscr N_{\mathscr A,f}),(a_i+\mathscr N_{\mathscr A,f})\rangle\\ &=\sum_{i=1}^\infty f(a_i^*a_i)=\|(a_i)+\mathscr N_{\mathscr E,f}\|^2.
\end{align*}
Hence $\psi$ is an isomorphism ensuring  $\frac{\mathscr E}{\mathscr N_{\mathscr E,f}}$ be complete.

Now, define $\pi$ by
\[
\pi:\mathcal{L}(\mathscr E)\to \mathcal{B}\left(\frac{\mathscr E}{\mathscr N_{\mathscr E,f}}\right)\,,\pi(T)(x+\mathscr N_{\mathscr E,f})=Tx+\mathscr N_{\mathscr E,f}\,.
\]
We claim  that $\pi $ is a representation of $\mathcal{L}(\mathscr E)$. In fact, suppose that $T\in \mathcal{L}(\mathscr E)$. If $f(\langle x,x\rangle)=0$ for some $x\in \mathscr E$, then
\begin{align*}
f(\langle Tx,Tx\rangle)\leq \|T\|^2f(\langle x,x\rangle)=0\,.
\end{align*}
This shows $\pi(T)$ is a well-defined operator on $\frac{\mathscr E}{\mathscr N_{\mathscr E,f}}$. If $T,S\in \mathcal{L}(\mathscr E)$, then, for every $x\in \mathscr E$, we have
\begin{align*}
\pi(TS)(x+\mathscr N_{\mathscr E,f}) =TSx+\mathscr N_{\mathscr E,f}
=\pi(T)(Sx+\mathscr N_{\mathscr E,f})
=\pi(T)\pi(S)(x+\mathscr N_{\mathscr E,f}).
\end{align*}
It follows that $\pi(TS)=\pi(T)\pi(S)$. Moreover,
\begin{align*}
\langle\pi(T^*)(x+\mathscr N_{\mathscr E,f}),y+\mathscr N_{\mathscr E,f}\rangle &=\langle T^*x+\mathscr N_{\mathscr E,f},y+\mathscr N_{\mathscr E,f}\rangle=f(T^*x,y)=f(x,Ty)\\
&=\langle x+\mathscr N_{\mathscr E,f},Ty+\mathscr N_{\mathscr E,f}\rangle=\langle x+\mathscr N_{\mathscr E,f},\pi(T)(y+\mathscr N_{\mathscr E,f})\rangle  \\
&=\langle \pi(T)^*(x+\mathscr N_{\mathscr E,f}),y+\mathscr N_{\mathscr E,f}\rangle.
\end{align*}
Therefore, $\pi(T^*)=\pi(T)^*$. Hence $\pi$ is a representation of $\mathcal{L}(\mathscr E)$. \\
 Let $x\in \mathscr E$ be arbitrary.
Since $BB^*\leq \lambda AA^*$, we have $\pi(B)\pi(B)^*\leq \lambda \pi(A)\pi(A)^*$.  Theorem \ref{Douglas} yields $\mathscr R(\pi(B))\subseteq \mathscr R(\pi(A))$. Then there is $y\in \mathscr E$ such that
\[
\pi(B)(x+\mathscr N_{\mathscr E,f})=\pi(A)(y+\mathscr N_{\mathscr E,f})\,,
\]
whence
\[
Bx-Ay\in \mathscr N_{\mathscr E,f}\,.
\]
 Hence  $\mathscr R(B)\subseteq \mathscr R(A)+\mathscr N_{\mathscr E,f}$.
 \end{proof}
 The following example shows that the reverse of Theorem \ref{thl2} does not hold.
 \begin{example}
 Let $\mathscr A=\{f\in C[0,1]: f(0)=0\}$ and let $\mathscr E=l_2(\mathscr A)$. Define $A,B\in \mathcal{L}(\mathscr E)$ by
\[
A(f_1,f_2,f_3,\ldots)=(gf_1,0,0,\ldots) \quad \mbox{and}\quad B(f_1,f_2,f_3,\ldots)=(f_1,0,0,\ldots),\quad  (f_n)\in \mathscr E\,,
\]
where $g(\lambda)=\lambda$ for all $\lambda\in [0,1]$.
Let $\tau$ be any pure state on $\mathscr A$. There is $x_0>0$ such that $\mathscr N_{\mathscr A,\tau}=\{f\in \mathscr A: f(x_0)=0\}$. For every $(f_n)\in \mathscr E$, define $(h_n)\in \mathscr N_{\mathscr E,f}$ by
\[
h_1(\lambda)=\begin{cases}f_1(\lambda),&\lambda\leq\frac{x_0}{2},\\
\frac{2f_1(\frac{x_0}{2})}{x_0}(\lambda-x_0), &\frac{x_0}{2}\leq \lambda\leq x_0,\\
0,&\lambda\geq x_0,\end{cases}\,\quad \mbox{and} \quad h_n=0\quad(n\neq 1)\,.
\]
We also  define $(g_n)\in \mathscr E$ by  
\[
g_1(\lambda)=\begin{cases}\frac{f_1(\lambda)-h_1(\lambda)}{\lambda},&\lambda>0,\\0,&\lambda=0,\end{cases}\,\quad \mbox{and}\quad g_n=0\quad(n\neq 0)\,.
\]
We have
\[
B(f_n)=A(g_n)+(h_n)\,.
\]
We claim that $BB^*\not \leq\lambda AA^*$ for any $\lambda>0$. To prove this, we set 
\[
f_1(\lambda)=\lambda\qquad(\lambda\in [0,1])\,,
\]
and $f=(f_1,0,0,\ldots)$
to get
\[
(\langle BB^*f,f\rangle)(\lambda)=\lambda^2\not\leq \lambda^3=(\langle AA^*f,f\rangle)(\lambda)\,.
\]
 \end{example}

%====================================================%
%====================================================%
%====================================================%
\bigskip

\noindent \textit{Conflict of Interest Statement.} On behalf of all authors, the corresponding author states that there is no conflict of interest.\\

\noindent\textit{Data Availability Statement.}  Data sharing is not applicable to this article as no datasets were generated or analyzed during the current study.

\bigskip

%====================================================%
%====================================================%
%====================================================%

\bibliographystyle{amsplain}

\end{document}